\documentclass[11pt]{amsart}
\usepackage[colorlinks = true,
            linkcolor = red,
            urlcolor  = magenta,
            citecolor = black,
            anchorcolor = blue]{hyperref}
\usepackage{color}
\usepackage[shortalphabetic,initials,nobysame]{amsrefs}
\usepackage{upgreek}
\usepackage{tikz}
\usepackage[applemac]{inputenc} 
\usetikzlibrary{arrows}
\usepackage{xcolor}
\usepackage[all]{xy}
\usepackage{graphicx}
\usepackage{wrapfig}
\usepackage{yfonts}
\usepackage{graphicx}
\usepackage{amssymb}
\usepackage{epstopdf}
\usepackage{mathrsfs}
\usepackage[mathcal]{eucal}
\usepackage{bm}

\renewcommand{\eprint}[1]{\href{https://arxiv.org/abs/#1}{#1}}

\setlength{\hoffset}{-1in}
\setlength{\voffset}{-1in}
\setlength{\oddsidemargin}{1.2in}
\setlength{\evensidemargin}{1.2in}
\setlength{\textwidth}{6.in}
\setlength{\textheight}{8.5in}
\setlength{\topmargin}{0.9in}
\setlength{\baselineskip}{14pt}

\DeclareMathOperator{\GL}{\mathrm{GL}}
\DeclareMathOperator{\PGL}{\mathrm{PGL}}
\DeclareMathOperator{\SL}{\mathrm{SL}}
\DeclareMathOperator{\Span}{span}
\DeclareMathOperator{\diag}{diag}
\DeclareMathOperator{\Hom}{Hom}
\DeclareMathOperator{\Res}{Res}

\newcommand{\tz}{\tilde{z}}

\newcommand{\hcL}{\hat{\cL}}
\newcommand{\om}{\omega}
\newcommand{\hf}{\hat{f}}
\newcommand{\bfa}{\bm{a}}

\newcommand{\fg}{\frak{g}}
\newcommand{\blam}{\bm{\lambda}}
\newcommand{\lam}{\lambda}
\newcommand{\mfq}{\mathfrak{q}}
\newcommand{\sD}{\mathscr{D}}

\setlength{\marginparwidth}{1.2in}
\let\oldmarginpar\marginpar
\renewcommand\marginpar[1]{\-\oldmarginpar[\raggedleft\footnotesize #1]
{\raggedright\footnotesize #1}}

\BibSpec{article}{%
    +{}  {\PrintAuthors}                {author}
    +{,} { \textit}                     {title}
    +{.} { }                            {part}
    +{:} { \textit}                     {subtitle}
    +{,} { \PrintContributions}         {contribution}
    +{.} { \PrintPartials}              {partial}
    +{,} { }                            {journal}
    +{}  { \textbf}                     {volume}
    +{ } { \PrintDatePV}            {date}
    +{,} { \issuetext}                  {number}
    +{,} { \eprintpages}                {pages}
    +{,} { }                            {status}
    +{,} { \DOI}                   {doi}
    +{,} { \eprint}        {eprint}
    +{}  { \parenthesize}               {language}
    +{}  { \PrintTranslation}           {translation}
    +{;} { \PrintReprint}               {reprint}
    +{.} { }                            {note}
    +{.} {}                             {transition}
    +{}  {\SentenceSpace \PrintReviews} {review}
}

\newtheorem{theorem}{Theorem}[section]
\newtheorem{lemma}[theorem]{Lemma}

\newtheorem{Thm}{Theorem}[section]
\newtheorem{Lem}[Thm]{Lemma}

\theoremstyle{definition}
\newtheorem{Def}[Thm]{Definition}
\theoremstyle{remark}
\newtheorem{Rem}[Thm]{Remark}

\newtheoremstyle{named}{}{}{\itshape}{}{\bfseries}{.}{.5em}{#1 #3}
\theoremstyle{named}

\def\Q{\mathbb{Q}}

\def\C{\mathbb{C}}
\def\Z{\mathbb{Z}}

\def\P{\mathbb{P}}

\def\sl{\mathfrak{sl}}
\def\gl{\mathfrak{gl}}

\def\cD{\mathcal{D}}
\def\cE{\mathcal{E}}
\def\cF{\mathcal{F}}
\def\cG{\mathcal{G}}
\def\cH{\mathcal{H}}

\def\cL{\mathcal{L}}

\def\cO{\mathcal{O}}

\def\cV{\mathcal{V}}
\def\cW{\mathcal{W}}

\def\cZ{\mathcal{Z}}

\def\a{\alpha}

\def\ze{\zeta}

\def\s{\sigma}

\def\bo{\textbf{o}}

\def\=>{\Longrightarrow}

\def\to{\longrightarrow}

\def\o+{\oplus}
\def\bo+{\bigoplus}

\def\<{\langle}
\def\>{\rangle}
\def\({\left(}
\def\){\right)}

\def\^{\wedge}
\def\+{\dagger}

\def\half{\frac{1}{2}}

\def\dd[#1,#2]{\frac{d#1}{d#2}}
\def\del[#1,#2]{\frac{\partial #1}{\partial #2}}
\def\over[#1]{\overline{#1}}
\def\vec[#1]{\overrightarrow{#1}}



\makeatletter
\def\mr@ignsp#1 {\ifx\:#1\@empty\else #1\expandafter\mr@ignsp\fi}%
\newcommand{\multiref}[1]{\begingroup
\xdef\mr@no@sparg{\expandafter\mr@ignsp#1 \: }%
\def\mr@comma{}%
\@for\mr@refs:=\mr@no@sparg\do{\mr@comma\def\mr@comma{,}\ref{\mr@refs}}%
\endgroup}
\makeatother

\newcommand{\hypref}[2]{\ifx\href\asklfhas #2\else\href{#1}{#2}\fi}
\newcommand{\Secref}[1]{Section~\multiref{#1}}

\newcommand{\figref}[1]{Fig.~\multiref{#1}}

\tikzset{->-/.style={decoration={
  markings,
  mark=at position .5 with {\arrow{latex}}},postaction={decorate}}}
\tikzset{
    >=latex
    }


\begin{document}
\title[$(\SL(N),q)$-opers, $q$-Langlands correspondence, quantum/classical duality]{$(\SL(N),q)$-opers, the $q$-Langlands correspondence, and quantum/classical duality}

\author{Peter Koroteev}
\address[Peter Koroteev]{\newline
Department of Mathematics,\newline
University of California at Davis,\newline
Mathematical Sciences Building,\newline
One Shields Ave,\newline
University of California,\newline
Davis, CA 95616\newline
         Email: pkoroteev@math.ucdavis.edu
          \newline \href{https://www.math.ucdavis.edu/research/profiles/?fac_id=pkoroteev}{URL}}

\author{Daniel S. Sage}
\address[Daniel S. Sage]{\newline
          Department of Mathematics, \newline
          Louisiana State University, \newline
          Baton Rouge, LA, USA; \newline
          Email: sage@math.lsu.edu
          \newline \href{http://math.lsu.edu/\textasciitilde sage}{URL}}

\author{Anton M. Zeitlin}
\address[Anton M. Zeitlin]{\newline
          Department of Mathematics, \newline
          Louisiana State University, \newline
          Baton Rouge, LA, USA; \newline
          IPME RAS, St. Petersburg
          \newline
          Email: zeitlin@lsu.edu
          \newline \href{http://math.lsu.edu/\textasciitilde zeitlin}{URL}}

\date{\today}

\numberwithin{equation}{section}

\maketitle

\begin{abstract}
  A special case of the geometric Langlands correspondence is given by
  the relationship between solutions of the Bethe ansatz equations for
  the Gaudin model and opers--connections on the projective line with
  extra structure.  In this paper, we describe a deformation of this
  correspondence for $\SL(N)$.  We introduce a difference equation
  version of opers called $q$-opers and prove a $q$-Langlands
  correspondence between nondegenerate solutions of the Bethe ansatz
  equations for the XXZ model and nondegenerate twisted $q$-opers with
  regular singularities on the projective line.  We show that the
  quantum/classical duality between the XXZ spin chain and the
  trigonometric Ruijsenaars-Schneider model may be viewed as a special
  case of the $q$-Langlands correspondence.  We also describe an
  application of $q$-opers to the equivariant quantum $K$-theory of
 the cotangent bundles to partial flag varieties.

\end{abstract}

\setcounter{tocdepth}{1}
\tableofcontents
\section{Introduction}\label{sec:intro}

\subsection{Opers and the Gaudin model}

One formulation of the geometric Langlands correspondence is the
existence of an isomorphism between spaces of conformal blocks for the
classical $W$-algebra associated to a simple complex Lie algebra $\fg$
and the dual affine Kac-Moody algebra ${}^L\hat{\fg}$ at the critical
level.  Since both these algebras admit deformations, it is natural to
conjecture the existence of deformed versions of the Langlands
correspondence, and indeed, this has been the subject of considerable
recent interest~\cite{Aganagic:2017smx,Gaiotto:2018,PestunStringTalk}.
In this paper, we describe a $q$-Langlands correspondence which is a
deformation of an important example of geometric Langlands, the
classical correspondence between the spectra of the Gaudin model and
opers on the projective line with regular singularities and trivial
monodromy.

Let $G$ be a simple complex algebraic group of adjoint type, and let
${}^L\fg$ be the Lie algebra of the Langlands dual group ${}^L G$.  Fix a
collection of distinct points $z_1,\dots,z_n$ in $\C$.  The Gaudin
Hamiltonians are certain mutually commuting elements of the algebra
$U({}^L \fg)^{\otimes n}$.  They are contained in a commutative
subalgebra $\cZ_{(z_i)}({}^L \fg)$ called the Gaudin algebra.  The
simultaneous eigenvalues of the actions of the Gaudin Hamiltonians on
$N$-fold tensor products of ${}^L\fg$-modules is given by the (maximal)
spectrum of this algebra, namely, the set of algebra homomorphisms
$\cZ_{(z_i)}({}^L \fg)\to\C$.\footnote{We remark that the Gaudin
  algebra is a subalgebra of the center of the universal enveloping
  algebra at the critical level, and this center was characterized in
  \cite{Feigin:1994in}. Ding and Etingof have found generators of the center of
  the quantum affine algebra at the critical level, thereby proving the $q$-analogue of this
  statement~\cite{Ding_1994}.}

Feigin, Frenkel, and Reshetikhin found a geometric interpretation of
this spectrum in terms of flat $G$-bundles on $\P^1$ with extra
structure~\cite{Feigin:1994in,Frenkel:ab,Frenkel:aa}. Let $B$ be a
Borel subgroup of $G$.  A $G$-oper on a smooth curve $X$ is a triple
$(\cF,\nabla,\cF_B)$, where $(\cF,\nabla)$ is a flat $G$-bundle on $X$
and $\cF_B$ is a reduction of $\cF$ satisfying a certain
transversality condition with respect to $\nabla$.  As an example, for
$\PGL(2)$-opers, this condition is that $\cF_B$ is nowhere preserved by
$\nabla$.  The space of $G$-opers can be realized more concretely as a
certain space of differential operators.  For example, a $\PGL(2)$-oper
can be identified with projective connections: second-order operators
$\partial_z^2-f(z)$ mapping sections of $K^{-1/2}$ to sections of
$K^{3/2}$, where $K$ is the canonical bundle.  It turns out that the
spectrum of $\cZ_{(z_i)}({}^L\fg)$ may be identified with the set of
$G$-opers on $\P^1$ with regular singularities at $z_1,\dots,z_n$ and
$\infty$, which therefore has the structure of an algebraic variety.

We now consider the action of the Gaudin algebra on the tensor product
of irreducible finite-dimensional modules
$V_{\blam}=V_{\lam_1}\otimes\dots \otimes V_{\lam_n}$, where $\blam$ is an
$n$-tuple of dominant integral weights.  The Bethe ansatz is a method
of constructing such simultaneous eigenvectors.  One starts with the
unique (up to scalar) vector $|0\rangle\in V_{\blam}$ of highest
weight $\sum\lam_i$; it is a simultaneous eigenvector.  Given a set of
distinct complex numbers $w_1,\dots,w_m$ labeled by simple roots
$\alpha_{k_j}$, one applies a certain order $m$ lowering operator with
poles at the $w_j$'s to $|0\rangle$.  If this vector is nonzero and
$\sum \lam_i -\sum\a_{k_j}$ is dominant, it is an eigenvector of the
  Gaudin Hamiltonians if and only if certain equations called the
  Bethe ansatz equations are satisfied (see \eqref{Betheeqsln}).
  Frenkel has shown that the corresponding point in the spectrum of
  the Gaudin algebra is a $G$-oper with regular singularities at the
  $z_i$'s and $\infty$ and and with trivial
  monodromy~\cite{Frenkel:aa}.

  In fact, it is possible to give a geometric description of all
  solutions of the Bethe equations (i.e., without assuming $\sum
  \lam_i -\sum\a_{k_j}$ is dominant) in terms of an enhanced version
  of opers.  A \emph{Miura $G$-oper} on $\P^1$ is a $G$-oper together
  with an additional reduction $\cF'_B$ which is preserved by
  $\nabla$.  The set of Miura opers with the same underlying oper is
  parametrized by the flag manifold $G/B$.  Frenkel has shown that
  there is a one-to-one correspondence between the set of solutions to
  the ${}^L \fg$ Bethe ansatz and ``nondegenerate'' Miura $G$-opers
  with regular singularities and trivial monodromy~\cite{Frenkel:ab}.
  To see how this works, let $H\subset B$ be a maximal torus.  The
  initial data of the Bethe ansatz gives rise to the explicit flat
  $H$-bundle (a \emph{Cartan connection})
\begin{equation*} \partial_z+\sum_{i=1}^n \frac{\lam_i}{z-z_i}-\sum_{j=1}^m\frac{\a_{k_j}}{z-w_j}.
\end{equation*}
There is a map from Cartan connections to Miura opers given by the
Miura transformation; this is just a generalization of the standard
Miura transformation in the theory of KdV integrable models.  It turns
out that the Bethe equations are precisely the conditions necessary
for the corresponding Miura oper to be regular at the $w_j$'s.

In the global geometric Langlands correspondence for $\P^1$, the
objects on the Galois side are flat $G$-bundles (with singularities)
on $\P^1$ while on the automorphic side, one considers $D$-modules on
enhanced versions of the moduli space of ${}^L G$-bundles over $\P^1$.
The correspondence between opers and spectra of the Gaudin model
provides an example of geometric Langlands.  Indeed, the eigenvector
equations for the Gaudin Hamiltonians for fixed eigenvalues determines
a $D$-module on the moduli space of ${}^L G$-bundles with parabolic
structures at $z_1,\dots,z_n$ and $\infty$ while the oper gives the
flat $G$-bundle.

\subsection{ $q$-opers and the $q$-Langlands correspondence}

Recall that the geometric Langlands correspondence may be viewed as an
identification of conformal blocks for the classical $\cW$-algebra
associated to $\fg$ and conformal blocks for the affine Kac-Moody
algebra ${}^L\hat{\fg}$ at the critical level.  Both these algebras
admit deformations.  For example, one may pass from ${}^L \hat{\fg}$
to the associated quantum affine algebra while at the same time moving
away from the critical level.  This led Aganagic, Frenkel, and
Okounkov to formulate a two-parameter deformation of geometric
Langlands called the quantum $q$-Langlands
correspondence~\cite{Aganagic:2017smx}.  This is an identification of
certain conformal blocks of a quantum affine algebra with those of a
deformed $\cW$-algebra, working over the infinite cylinder.  They
prove this correspondence in the simply-laced case; their proof is
based on a study of the equivariant $K$-theory of Nakajima quiver
varieties whose quiver is the Dynkin diagram of $\fg$.

In this paper, we take another more geometric approach, involving
\emph{$q$-connections}, a difference equation version of flat
$G$-bundles.  Our goal is to establish a $q$-Langlands correspondence
between \emph{$q$-opers} with regular singularities and the spectra of
the XXZ spin chain model. Here, we only consider this correspondence
in type $A$, where we can describe $q$-opers explicitly as a vector
bundle endowed with a difference operator, together with a complete
flag of subbundles that is well-behaved with respect to the
operator.\footnote{The authors and E. Frenkel have subsequently solved
  this problem in the general case using different
  techniques~\cite{Frenkel:2020a}.}

Fix a nonzero complex number $q$ which is not a root of unity.  We are
interested in (multiplicative) difference equations of the form
$s(qz)=A(z)s(z)$; here $A(z)$ is an $N\times N$ invertible matrix
whose entries are rational functions.  To express this more
geometrically, we start with a trivializable rank $n$ vector bundle
$E$ on $\P^1$, and let $E^q$ denote the pullback of $E$ via the map
$z\mapsto qz$. A $(\GL(N),q)$-connection on $\P^1$ is an invertible
operator $A$ taking sections of $E$ to sections of $E^q$.  If the
matrices $A(z)$ have determinant one in some trivialization, $(E,A)$
is called an $(\SL(N),q)$-connection.  Just as in the classical
setting, an $(\SL(N),q)$-oper is a triple $(E,A,E_B)$, where $E_B$ is a
reduction to a Borel subgroup satisfying a certain transversality
condition with respect to $A$.  We also define a Miura $q$-oper to be
a $q$-oper with an additional reduction $E'_B$ preserved by $A$.  We
remark that these definitions make sense when $\P^1$ is replaced by
the formal punctured disk.  In this setting, a concept equivalent to
$(\GL(N),q)$-connections was introduced by Baranovsky and
Ginzburg~\cite{Baranovsky:1996} while the notion of a formal $q$-oper
is inherent in the work of Frenkel, Reshetikhin, Semenov-Tian-Shansky,
and Sevostyanov on Drinfeld-Sokolov reduction for difference
operators~\cite{Frenkel_1998,1998CMaPh.192..631S}.

We now explain how $q$-opers can be viewed as the Galois side of a
$q$-Langlands correspondence.  The XXZ spin chain model is an
integrable model whose dynamical symmetry algebra is the quantum
affine algebra $U_q(\hat{\fg})$~\cite{Reshetikhin:1987}.  Under an
appropriate limiting process, it degenerates to the Gaudin model.  The
model depends on certain twist parameters which can be described by a
diagonal matrix $Z$.  We will always assume that $Z$ has distinct
eigenvalues.  Eigenvectors of the Hamiltonians in the XXZ model can
again be found using the Bethe ansatz, and the spectra can be
expressed in terms of Bethe equations (see \eqref{sl2qbethe},
\eqref{eq:BetheExplicit} below).

It turns out that these equations also arise from appropriate
$q$-opers.  We consider $q$-opers with regular singularities on
$\P^1\setminus\{0,\infty\}$.  We further assume that the $q$-oper is
\emph{$Z$-twisted}, where $Z$ is the diagonal matrix appearing in the
Bethe equations; this simply means that the underlying $q$-connection
is $q$-gauge equivalent to the $q$-connection with matrix $Z$.  (This
may be viewed as the quantum analogue of the opers with a double pole
singularity at $\infty$ considered by Feigin, Frenkel, Rybnikov, and
Toledano-Laredo in their work on an inhomogeneous version of the
Gaudin model~\cite{FFTL:2010,Rybnikov:2010}.)  Given a $Z$-twisted
$q$-oper with regular singularities, we examine a certain associated
Miura $q$-oper.  The assumption that this Miura $q$-oper is
``nondegenerate'' imposes certain conditions on the zeros of
\emph{quantum Wronskians} arising from the $q$-oper, and these
conditions lead to the XXZ Bethe equations.  Thus, in type $A$, we
obtain the desired $q$-Langlands correspondence.  It should be noted
that in contrast to the results of \cite{Aganagic:2017smx}, our results
do not depend on geometric data related to the quantum $K$-theory of
Nakajima quiver varieties.  In particular, there are no restrictions
on the dominant weights that can appear in our correspondence.

Our approach has some similarities with the earlier work of Mukhin and
Varchenko on discrete opers and the spectra of the XXX
model~\cite{Mukhin_2005}.  Here, they considered additive difference
equations, i.e., equations of the form $f(z+h)=A(z)f(z)$ where $A$ is
a $G$-valued function and $h\in\C^*$ is a fixed parameter.  They
defined a discrete oper to be the linear difference operator
$f(z)\mapsto f(z+h)-A(z)f(z)$ if $A(z)$ had a suitable form.  They
also introduced a notion of discrete Miura oper and showed that they
correspond to solutions of the XXX Bethe ansatz equations.  Unlike our
$q$-opers, these discrete opers do not seem to be related to the
difference equation version of Drinfeld-Sokolov reduction considered
in~\cite{Frenkel_1998}.

Since the XXZ model may be viewed as a deformation of the Gaudin
model, one would expect that we should recover the Gaudin Bethe
equations under an appropriate limit.  In fact, by taking this limit
in two steps, one can say more.  First, a suitable limit takes one to
a twisted version of the XXX spin chain, giving rise to a
correspondence between the solutions of the Bethe equations for this
model and a twisted analogue of the discrete opers of
\cite{Mukhin_2005}. A further limit brings one back to the
inhomogeneous Gaudin model and opers with irregular singularity
considered in ~\cite{FFTL:2010,Rybnikov:2010}.

\subsection{QQ-systems, XXZ models, and Baxter operators}

As we have seen, Bethe ansatz equations arise from $q$-opers by
considering quantum Wronksians and their properties.   More
precisely, the algebraic structure of the set of 
quantum Wronksians governs both twisted $Z$-opers and solutions of
the XXZ Bethe equations and leads to the correspondence between the
two.  This algebraic structure is a special case of a more general
phenomenon,  which appears in the representation theory of quantum
affine algebras.  Hernandez and Jimbo have introduced a category $\cO$ of
highest weight
representations of a Borel subalgebra $U_q(\hat{\mathfrak{b}}_+)$ of the quantum affine algebra $U_q(\hat{\mathfrak{g}})$~\cite{Hernandez_2012}.
This category includes all finite-dimensional representations of
$U_q(\hat{\mathfrak{g}})$, but also contains infinite-dimensional
representations on which the Borel action does not extend to the
entire quantum affine algebra.  Within this category, they have
constructed certain ``prefundamental modules'' which generate the
Grothendieck ring of $\cO$.  The relations among these generators
give rise to so-called $QQ$-systems (or $Q\tilde{Q}$-systems), which
generalize the the quantum Wronskian relations for
$\hat{\mathfrak{sl_2}}$ obtained in \cite{Bazhanov:1998dq}.

These $QQ$-systems, in their manifestation as relations in the
Grothendieck ring, have arisen in various circumstances
\cite{Krichever_1997, Mukhin_2003, Kazakov_2016, Bittmann_2020}.
However, they also have an explicit realization in terms of certain
polynomial equations.  As this is the form in which
$QQ$-systems appear in the context of $q$-opers, we briefly explain
how this works.

Upon fixing a finite-dimensional representation $\mathcal{H}$ of
$U_q(\hat{\mathfrak{g}})$ (known as the physical space), one obtains a
particular XXZ integrable model associated to the given quantum group.
The \emph{spectrum} of this integrable model is described by the
eigenvalues of the \emph{transfer matrices}, which are constructed in
the following way.  The $R$-matrix, associated to
$U_q(\hat{\mathfrak{g}})$ belongs to
$U_q(\hat{\mathfrak{b}}_+)\otimes U_q(\hat{\mathfrak{b}}_-)$, where
$\mathfrak{b}_\pm$ are opposite Borel subalgebras. For any
$\cV\in\cO$, the $R$-matrix acts on $\cV\otimes\cH$, and one can now
take the weighted trace of the $R$-matrix with respect to the first
factor. This trace is an operator acting on $\cH$.  In the particular
cases where $\cV$ is a finite-dimensional representation of $U_q(\hat{\mathfrak{g}})$
or a prefundamental module, the resulting operators are
called transfer matrices and Baxter $Q$-operators respectively.  Upon
certain canonical rescalings of the transfer matrices and
$Q$-operators, the eigenvalues of these operators become
polynomials, and the relations in the Grothendieck ring turn into
polynomial equations~\cite{Frenkel:2013uda,Frenkel_2018},
thereby proving conjectures on transfer matrices found in
\cite{Frenkel:ls}.

We observe that the $QQ$-system which describes twisted $(\SL(N),q)$-opers is the one associated to 
$\hat{\mathfrak{sl}}_n$ in \cite{Frenkel_2018}.

\subsection{Quantum/classical duality and applications to enumerative geometry} 

Quantum/classical duality is a relationship between a quantum and a
classical integrable system.  Well-known examples are the relationship
in type $A$ between the Gaudin model and the rational Calogero-Moser
system and between the XXX spin-chain and the rational
Ruijsenaars-Schneider model.  Both of these can be viewed as limits of
the duality between the XXZ spin-chain and the trigonometric
Ruijsenaars-Schneider model
\cite{Hal:2015,2012JMP....53l3512H,Mukhin:2009}.\footnote{We refer the
  reader to Section 4 of \cite{Gaiotto:2013bwa} for more information
  on quantum/classical duality and additional references.} This
duality is given by a transformation relating two sets of generators
for the quantum $K$-theory ring of cotangent bundles of full flag
varieties \cite{Koroteev:2017aa}.  One set of generators is obtained
from the XXZ Bethe equations.  One considers certain Bethe equations
where the dominant weights all come from the defining representation
and then takes symmetric functions on the corresponding Bethe roots.
The other generators are functions on a certain Lagrangian subvariety
in the phase space for the tRS model.

This correspondence has a direct interpretation in terms of twisted
$q$-opers; indeed, it may be viewed as a special case of the
$q$-Langlands correspondence.  As we discussed in the previous
section, Bethe equations arise from nondegenerate twisted $q$-opers.
The Bethe roots are precisely those zeros of the quantum Wronskians
associated to the $q$-oper which are not singularities of the
underlying $q$-connection.  On the other hand, there is an embedding
of the tRS model into the space of twisted $q$-opers.  More precisely,
a $q$-oper structure on a given $q$-connection $(E,A)$ is determined
uniquely by a full flag $\cL_\bullet$ of vector subbundles which
behave in a specified way with respect to $A$.  A section $s$
generating the line bundle $\cL_1$ over $\P^1\setminus\infty$ may be
viewed as an $N$-tuple of monic polynomials $(s_1,\dots,s_N)$.  If
these polynomials are all linear, then their constant terms are
precisely the momenta in the phase space of the tRS model.
Quantum/classical duality is then equivalent to the statement that the
Bethe roots and the constant terms of these monic linear polynomial
both give coordinates for an appropriate spaces of twisted $q$-opers.

If the monic polynomials $s_i$ are no longer linear, it is still the
case that the Bethe roots and the coefficients of these polynomials
are equivalent sets of coordinates for a space of twisted $q$-opers.
It is more complicated to interpret this statement as a duality
between the XXZ spin-chain and a classical multiparticle integrable
system. However, we do get an application to the quantum $K$-theory of
the cotangent bundles of partial flag varieties. This $K$-theory ring
is again generated by symmetric functions in appropriate Bethe roots.
In \cite{Rimanyi:2014ef}, Rimanyi, Tarasov, and Varchenko gave another
conjectural set of generators for this ring.  We show that these
generators are precisely those obtained from the coordinates for the
set of twisted $q$-opers coming from the coefficients of the
polynomials $s_i$, thereby proving this conjecture.

We remark that other applications of the XXZ Bethe ansatz equations to
geometry have appeared in the recent physics literature~\cite{Nekrasov:2009ui,Nekrasov:2009uh,Gaiotto:2013bwa,Nekrasov:2013aa}.

\subsection{Structure of the paper}
In \Secref{Sec:SL(N)Op}, we recall the relationship between
monodromy-free $\SL(N)$-opers with regular singularities on the
projective line and Gaudin models \cite{Frenkel:aa, Frenkel:ab}. We
follow an approach hinted at in \cite{Gaiotto:2011nm}, describing
opers in terms of vector bundles instead of principal bundles and
obtaining the Bethe equations from Wronskian relations.  We also
discuss the correspondence between an inhomogeneous version of the
Gaudin model and opers with an irregular singularity at infinity.

Next, in \Secref{Sec:SL2qOps}, we consider a $q$-deformation of opers
in the case of $\SL(2)$.  We adapt the techniques of the previous
section to give a correspondence between twisted $q$-opers and the
Bethe ansatz equations for the XXZ spin chain for $\sl_2$.
In \Secref{Sec:SLNqOps}, we generalize these constructions to
$\SL(N)$ and again prove a correspondence between $q$-opers and the
XXZ spin chain model.  We then discuss the case of $\SL(3)$ in detail
in \Secref{Sec:SL3qOps}.

In \Secref{Sec:Limits}, we consider classical limits of our results.
We show that an appropriate limit leads to a correspondence between a
twisted analogue of the discrete opers considered in
\cite{Mukhin_2005} and the spectra of a version of the XXX spin
chain.  By taking a further limit, we recover the relationship between
opers with an irregular singularity and the inhomogeneous Gaudin
model \cite{FFTL:2010,Rybnikov:2010}.

Finally, \Secref{Sec:Kth} is devoted to some geometric implications of
the results of this paper.  The quantum $K$-theory ring of the
cotangent bundle to the variety of partial flags is known to be
described via the Bethe ansatz equations
\cite{Koroteev:2017aa}.  We find a new set of generators defined in
terms of canonical coordinates on an appropriate set of $q$-opers.
These generators turn out to be the same as the conjectural generators
given in \cite{Rimanyi:2014ef}.

\addtocontents{toc}{\protect\setcounter{tocdepth}{1}}
\subsection*{Acknowledgments}
P.K. and A.M.Z. are grateful to the 2018 Simons Summer Workshop for
providing a wonderful working atmosphere in the early stages of this
project. P.K. thanks the organizers of the program ``Exactly Solvable
Models of Quantum Field Theory and Statistical Mechanics'' at the
Simons Center for Geometry and Physics, where part of this work was
done. We are also indebted to N. Nekrasov, A. Schwarz, and Y.
Soibelman for stimulating discussions and suggestions.  D.S.S. was
partially supported by NSF grant DMS-1503555, and A.M.Z. was partially
supported by a Simons Collaboration grant. P.K. is partly supported by AMS-Simons grant.

\addtocontents{toc}{\protect\setcounter{tocdepth}{2}}

\section{$\SL(N)$-opers with trivial monodromy and regular singularities}\label{Sec:SL(N)Op}

\subsection{$\SL(2)$-opers and Bethe equations}
In this section, we describe a simple reformulation of the results of
\cite{Frenkel:aa, Frenkel:ab} due to Gaiotto and Witten
\cite{Gaiotto:2011nm}.
\begin{Def} A \emph{$\GL(2)$-oper} on $\mathbb{P}^1$ is a triple
  $(E,\nabla,\cL)$, where $E$ is a rank $2$ vector
  bundle on $\mathbb{P}^1$, $\nabla:E\to E\otimes K$ is a connection
  (here $K$ is the canonical bundle), and $\cL$ is a line subbundle
  such that the induced map $\bar{\nabla}:\mathcal{L}\to
  E/\mathcal{L}\otimes K$ is an isomorphism. The triple is called an
  \emph{$\SL(2)$-oper} if the structure group of the flat
  $\GL(2)$-bundle may be reduced to $\SL(2)$.
\end{Def}
We always assume that the vector bundle $E$ is trivializable.

The oper condition may be checked explicitly in terms of a determinant
condition on local sections.  Indeed, $\bar{\nabla}$ is an isomorphism
in a neighborhood of a given point $z$ if for some (or for any)  local
section $s$ of $\cL$ with $s(z)\ne 0$, 
$$s(z)\wedge \nabla_z s(z)\neq 0.$$ Here,
$\nabla_z=\iota_{\frac{d}{dz}}\circ \nabla$, where
$\iota_{\frac{d}{dz}}$ is the inner derivation by the vector field $\frac{d}{dz}$.

In this section, we will be interested in $\SL(2)$-opers with regular
singularities.  An $\SL(2)$-oper with regular singularities of weights
$k_1,\dots,k_L,k_\infty$ at the points $z_1,\dots,z_L,\infty$ is a
triple $(E,\nabla,\cL)$ as above where $\bar{\nabla}$ is an
isomorphism everywhere except at each $z_i$ (resp. $\infty$), where it
has a zero of order $k_i$ (resp. $k_\infty$).  Concretely, near the
point $z_i$, we have
\begin{equation}\label{deg}
s(z)\wedge \nabla_z s(z)\sim (z-z_i)^{k_i}.
\end{equation}
We will always assume that our opers have trivial monodromy, i.e.,
that the monodromy of the connection around each $z_i$ is
trivial. This means that after an appropriate gauge change, we can
assume that the connection is trivial. (Recall that changing the
trivialization of $E$ by $g(z)$ induces gauge change in the
connection; explicitly, the new connection is
$g(z)\nabla g(z)^{-1}-d(g(z))g(z)^{-1}$.)
In terms of this
trivialization of $E$ over $\mathbb{P}^1\setminus\infty$, the line
bundle $\cL$ is generated over this affine space by the section
\begin{equation}
s=\begin{pmatrix}   {q}_{+}(z) \\
   {q}_-(z)
 \end{pmatrix},
\end{equation}
where $q_{\pm}(z)$ are polynomials without common roots.
The condition \eqref{deg} leads to the following equation on the \emph{Wronskian}:
\begin{equation}\label{Wron}
q_+(z)\partial_zq_{-}(z)-\partial_zq_+(z)q_-(z)=\rho(z),
\end{equation}
where $\rho(z)$ is a polynomial whose zeros are determined by
\eqref{deg}.  After multiplying $s$ by a constant, we may take
$\rho(z)=\prod^L_{i=1}(z-z_i)^{k_i}$.  By applying a constant gauge
transformation in $\SL(2,\C)$, we may further normalize $s$ so that
$\deg(q_{-})<\deg(q_{+})$ and $q_{-}(z)=\prod^{l_-}_{i=1}(z-w_i)$ has
leading coefficient $1$.  (More precisely, transforming by
$\left(\begin{smallmatrix} 0 & 1\\ -1 & 0
  \end{smallmatrix}\right)$ if necessary allows us to assume that
$\deg(q_{-})\le\deg(q_{+})$; if the degrees are equal, transforming by
an elementary matrix brings us to the case $\deg(q_{-})<\deg(q_{+})$.
The final reduction uses a diagonal gauge change.)

We now make the further assumption that our oper is
\emph{nondegenerate}, meaning that none of the $z_i$'s are roots of
$q_{-}$.  It is now an immediate consequence of \eqref{Wron} that each
root of $q_{-}$ has multiplicity $1$.

Let $k=\sum^L_{i=1} k_i$ denote $\deg(\rho)$.  An easy calculation
using the fact that $\deg(q_-)<\deg(q_+)$ gives
$\deg(q_-)+\deg(q_+)=k+1$; this implies that $\deg(q_{-})=l_-\leq
k/2$.   We now rewrite \eqref{Wron} in the equivalent form
\begin{equation}\label{quot}
\partial_z\left(\frac{q_{+}(z)}{q_{-}(z)}\right)=-\frac{\rho(z)}{q_{-}(z)^2}.
\end{equation}
Since the residue at each $w_i$ of the left-hand side of
this equation is $0$, computing the residues of the right-hand side
leads to the conditions
\begin{equation}\label{sl2bethe}
\sum_m\frac{k_m}{z_m-w_i}=\sum_{j\neq i}\frac{2}{w_j-w_i}, \qquad i=1,\dots, l_-.
\end{equation}
These are the Bethe ansatz equations for the $\sl_2$-Gaudin
model at level $k-2l_{-}\ge 0$; they determine the spectrum of this
model.

A local section for $\cL$ at $\infty$ is given by 
\begin{equation}
\begin{pmatrix}   \tilde{q}_{+}(\tilde z) \\
   \tilde{q}_-(\tilde z)
 \end{pmatrix}
 =\tilde{z}^{l_{+}}\begin{pmatrix}
   q_+(1/\tilde{z}) \\
  q_-(1/\tilde{z})  
\end{pmatrix},
\end{equation}
where $l_{+}=\deg(q_{+})$.  If we set $k_{\infty}=k-2l_{-}=l_{+}-l_{-}-1$, we obtain
\begin{equation}
 \tilde{q}_+(\tilde z)\partial_{\tz}  \tilde{q}_{-}(\tilde z)-\partial_{\tz} \tilde{q}_+(\tilde z) \tilde{q}_-(\tilde z)\sim  \tz^{k_{\infty}}.
\end{equation}
Thus, we have proved  the following theorem. 
\begin{Thm}\label{operthm}
  There is a one-to-one correspondence between the spectrum of the
  Gaudin model, described by the Bethe equations for
  dominant weights, and the space of nondegenerate $\SL(2)$-opers with trivial
  monodromy and regular singularities at the points $z_1,
  \dots, z_L, \infty$ with weights $k_1, \dots, k_L, k_{\infty}$.
\end{Thm}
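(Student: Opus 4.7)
The plan is to extract both directions of the bijection from the calculations already carried out in the section: the forward map is essentially done, so the real content of the proof is the reverse direction and the matching of the weight at infinity with the dominance condition.

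For the forward direction, I would start with a nondegenerate $\SL(2)$-oper $(E,\nabla,\cL)$ with the specified data. Trivial monodromy and trivializability of $E$ let me trivialize $\nabla$ over $\P^1\setminus\infty$, and the normalization procedure described in the text (using $\left(\begin{smallmatrix} 0 & 1\\ -1 & 0\end{smallmatrix}\right)$, an elementary matrix, and a diagonal gauge transformation in $\SL(2,\C)$) produces a canonical generator $s=(q_+,q_-)^T$ of $\cL$ with $q_+,q_-$ coprime, $q_-$ monic, and $\deg(q_-)<\deg(q_+)$. The regular-singularity condition \eqref{deg} at the $z_i$ together with nondegeneracy forces the Wronskian to equal $\rho(z)=\prod_i(z-z_i)^{k_i}$ and forces the roots $w_1,\dots,w_{l_-}$ of $q_-$ to be simple and disjoint from the $z_i$. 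Rewriting the Wronskian as \eqref{quot} and equating residues at each $w_i$ to zero yields the Bethe equations \eqref{sl2bethe}, and the local analysis at $\infty$ gives $k_\infty=k-2l_-$, which is nonnegative because $\deg(q_-)\le k/2$; hence the tuple $(k_1,\dots,k_L,k_\infty)$ really is a tuple of dominant weights.

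For the reverse direction, I would begin with a solution $(w_1,\dots,w_{l_-})$ of the Bethe equations \eqref{sl2bethe} with the $w_i$ distinct and disjoint from the $z_m$, and with $l_-\le k/2$ (so that $k_\infty=k-2l_-\ge 0$). Set $q_-(z)=\prod_i(z-w_i)$ and $\rho(z)=\prod_m(z-z_m)^{k_m}$. The Bethe equations are precisely the vanishing of the residues of $-\rho/q_-^2$ at the $w_i$, so this rational function has zero residue at every finite pole and thus admits a rational antiderivative; writing the antiderivative as $q_+(z)/q_-(z) + C$ and absorbing $C$ into $q_+$, we obtain a polynomial $q_+$ uniquely determined up to an additive multiple of $q_-$. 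The degree count (from the behavior at $\infty$, where the antiderivative has no constant term beyond $C$) forces $\deg(q_+)=k-l_-+1$, which is strictly larger than $\deg(q_-)=l_-$ precisely because $k_\infty\ge 0$; this fixes the ambiguous multiple of $q_-$ and makes $q_+$ unique up to scalar. Coprimality of $q_+$ and $q_-$ is automatic, since a common root $w_i$ would force the residue of $\rho/q_-^2$ at $w_i$ to be zero in a way incompatible with $\rho(w_i)\neq 0$. The pair $(q_+,q_-)$ defines a section $s$ of a line subbundle $\cL\subset\cO^{\oplus 2}$, and the Wronskian relation guarantees that $(E,d,\cL)$ is a nondegenerate $\SL(2)$-oper with the prescribed singularities at the $z_i$ and at $\infty$.

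Finally, I would verify that the two constructions are mutually inverse: applying the forward map to the oper built in the reverse direction recovers $(w_i)$ by construction, while applying the reverse map to the normalized data $(q_+,q_-)$ of a nondegenerate oper recovers the same $q_+$ up to the scalar already fixed by the monic/degree normalization. I expect the only subtle point to be the reverse direction's degree bookkeeping: tracking the leading behavior at $\infty$ to pin down $\deg(q_+)$ and ensuring that the ambiguity of an additive multiple of $q_-$ is eliminated by the inequality $\deg(q_-)<\deg(q_+)$, which is where the dominance hypothesis $k_\infty\ge 0$ enters essentially. Everything else is a direct translation between residue identities and the Wronskian relation, already laid out in the text.
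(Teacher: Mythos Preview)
Your forward direction and the computation at $\infty$ match the paper exactly; in fact, the paper's entire ``proof'' is the discussion preceding the theorem, capped by ``Thus, we have proved the following theorem,'' so the reverse construction you supply is additional detail the paper leaves to the reader.

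There is one slip in your reverse direction. The inequality $\deg(q_-)<\deg(q_+)$ does \emph{not} fix the additive constant $C$: adding $Cq_-$ to $q_+$ leaves both the degree of $q_+$ and the inequality unchanged. What actually happens is that the ambiguity is harmless rather than eliminated: replacing $q_+$ by $q_++Cq_-$ is exactly the effect of the constant gauge transformation $\left(\begin{smallmatrix}1&C\\0&1\end{smallmatrix}\right)\in\SL(2,\C)$, so all choices of $C$ give isomorphic opers. Once you say this, the map $\{\text{Bethe solutions}\}\to\{\text{opers}\}$ is well-defined without any further normalization of $q_+$, and your inverse check goes through. A second minor point: your coprimality argument is phrased in terms of residues, but the residue at $w_i$ is already zero by the Bethe equations, so that sentence doesn't bite. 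The clean way is to use the Wronskian directly: evaluating $q_+q_-'-q_+'q_-=\rho$ at $w_i$ gives $q_+(w_i)q_-'(w_i)=\rho(w_i)\neq 0$, hence $q_+(w_i)\neq 0$.
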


\subsection{Miura opers and the Miura transformation}

The previous theorem raises the natural question of whether one can
give a geometric interpretation to solutions of the Bethe equations
without assuming that the level $k-2l_-$ is nonnegative.  \emph{Miura
  opers} provide such an description.  A Miura oper is an oper
$(E,\nabla,\cL)$ together with an additional line bundle $\hcL$
preserved by $\nabla$.  There may be a finite set of points where
$\cL$ and $\hcL$ do not span $E$.  It turns out that one can associate
to any oper with regular singularities a family of Miura opers
parameterized by the flag variety~\cite{Frenkel:aa}.

Given a Miura oper, we may choose a trivialization of $E$ so that the
line bundle $\hcL$ is generated by the section $\hat{s}=(1,0)$.  We retain
our notation for the section
$s=\left(\begin{smallmatrix}q_{+}\\q_{-}
  \end{smallmatrix}
\right)$ generating $\cL$, but here, we do not impose any restrictions
on $\deg(q_{-})$.

Theorem~\ref{operthm} can be generalized to give the following
theorem, which is proved in a similar way.

\begin{Thm}\label{Miurasl2}
  There is a one-to-one correspondence between the set of solutions of
  the Bethe Ansatz equations \eqref{sl2bethe} and the set of
  nondegenerate $\SL(2)$-Miura opers with trivial monodromy and regular
  singularities at the points $z_1, \dots, z_L, \infty$ with weights
  at the finite points given by $k_1, \dots, k_L$.
\end{Thm}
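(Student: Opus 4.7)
The plan is to adapt the proof of Theorem~\ref{operthm} to the Miura setting, with the role of the degree normalization of $s$ replaced by the extra datum of the $\nabla$-invariant subbundle $\hcL$. First I would choose a global trivialization of $E$ in which $\nabla=\partial_z$ is the trivial connection and $\hcL$ is generated by $\hat{s}=(1,0)^T$. Such a trivialization exists because the trivial monodromy assumption provides a gauge in which $\nabla$ is trivial, and in any such gauge the $\nabla$-invariant line $\hcL$ is spanned by a constant vector, which may then be rotated to $(1,0)^T$ by a further constant $\SL(2)$ transformation. In this frame, $\cL$ is generated by a polynomial vector $s=(q_+,q_-)^T$ with no common factor, and---in contrast to the setup of Theorem~\ref{operthm}---no a priori restriction on $\deg(q_-)$ is imposed.

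For the direction from Miura opers to Bethe solutions, the oper condition together with the regular singularity data yields the Wronskian identity \eqref{Wron}. The nondegeneracy hypothesis forces $w_i\ne z_j$ for all $i,j$, and \eqref{Wron} then shows that the roots $w_1,\dots,w_{l_-}$ of $q_-$ are simple. Rewriting as in \eqref{quot} and setting the residue of $-\rho/q_-^2$ to zero at each $w_i$---necessary because the left-hand side is the derivative of a rational function and so has no residues---yields the system \eqref{sl2bethe}. Conversely, given a solution $\{w_1,\dots,w_{l_-}\}$ of \eqref{sl2bethe}, I would set $q_-(z)=\prod_i(z-w_i)$ and observe that the Bethe equations are precisely the vanishing of the residues of $-\rho/q_-^2$ at the $w_i$'s. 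Hence $-\rho/q_-^2$ admits a rational antiderivative $F(z)$ with only simple poles at the $w_i$'s, and $q_+(z):=F(z)\,q_-(z)$ is a polynomial. The quadruple $(E,\partial_z,\cL,\hcL)$ with $E$ trivial, $\cL=\langle(q_+,q_-)^T\rangle$, and $\hcL=\langle(1,0)^T\rangle$ is then a nondegenerate $\SL(2)$-Miura oper with the prescribed data at the finite points.

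The hard part will be the bijectivity statement. The reverse construction has two sources of ambiguity: the integration constant in $F$ shifts $q_+$ by a scalar multiple of $q_-$, and an overall rescaling of $q_-$ rescales both polynomials. These coincide precisely with the residual upper-triangular $\SL(2)$ gauge freedom that preserves both the trivial form of $\nabla$ and the line $\hcL=\langle(1,0)^T\rangle$, so the construction descends to isomorphism classes of Miura opers. Verifying that this accounts for all of the non-uniqueness, and that the forward and reverse constructions are mutually inverse on equivalence classes, is the main technical step to be carried out carefully.
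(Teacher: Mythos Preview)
Your proposal is correct and follows essentially the same approach as the paper, which simply remarks that the theorem ``is proved in a similar way'' to Theorem~\ref{operthm} after fixing a trivialization with $\nabla=\partial_z$ and $\hcL=\langle(1,0)^T\rangle$ and dropping the degree restriction on $q_-$. Your write-up in fact supplies more detail than the paper does---particularly the explicit reverse construction of $q_+$ via an antiderivative of $-\rho/q_-^2$ and the identification of the residual upper-triangular constant gauge freedom with the ambiguities in that construction---and these details are all sound.
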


We now give a different formulation of $\SL(2)$-opers which shows how
the eigenvalues of the Gaudin Hamiltonian can be seen directly from
the oper. We will do this by applying several $\SL(2)$-gauge transformations to our
trivial connection to reduce it to a canonical form.  We start with
a gauge change by $g(z)=\left(\begin{smallmatrix}  q_-(z) & -q_+(z)\\
   0 &  q_-^{-1}(z)
 \end{smallmatrix}\right)$; note that  $g(z)s(z)=\left(\begin{smallmatrix}0\\1
  \end{smallmatrix}\right)$.  The new connection matrix is 
\begin{equation} -(\partial_z g)g^{-1}= -\begin{pmatrix}\partial_z
    q_-(z)& -\partial_z q_+\\
   0 &   -\frac{\partial_z q_-(z)}{q_-(z)^2}
 \end{pmatrix}
 \begin{pmatrix}q_{-}^{-1}(z) & q_+(z)\\
   0 &   q_{-}(z)
 \end{pmatrix}=\begin{pmatrix}\frac{-\partial_z q_-(z)}{q_-(z)}& -\rho(z)\\
   0 &   \frac{\partial_z q_-(z)}{q_-(z)}
 \end{pmatrix}.
\end{equation}
  Next, the diagonal transformation $\left(\begin{smallmatrix} \rho(z)^{-1/2}& 0\\
   0 &   \rho(z)^{1/2}
 \end{smallmatrix}\right)$
  brings us to the \emph{Cartan connection}
  \begin{equation}
  A(z)=\begin{pmatrix}
    -u(z)& -1\\
   0 &   u(z)
 \end{pmatrix},
\end{equation}

  where $$u(z)=-\frac{\partial_z\rho(z)}{2\rho(z)}+\frac{\partial_z
    q_-(z)}{q_-(z)}=-\sum_m\frac{k_m/2}{z-z_m}+\sum_i
  \frac{1}{z-w_i}.$$
  Finally, we apply the \emph{Miura transformation}: gauge change by the lower triangular matrix  $\left(\begin{smallmatrix} 1& 0\\
      u(z) & 1
 \end{smallmatrix}\right)$ gives the connection matrix 
 \begin{equation}\label{e:miuraform}
 B(z)=\begin{pmatrix}
    0& -1\\
  - t(z) &   0
\end{pmatrix},\qquad\text{ where $\,t(z)=\partial_z u(z)+u^2(z)$.}
\end{equation}

An explicit computation using the Bethe equations \eqref{sl2bethe}
gives 
\begin{equation*}
t(z)=\sum_m\frac{k_m(k_m+2)/4}{ (z-z_m)^2}+\sum_m{\frac{c_m}{z-z_m}},
\end{equation*}
where
\begin{equation*}
c_m=k_m\bigg(\sum_{n\neq m}\frac{k_n/2}{z_m-z_n}-\sum_{i=1}^{l_-}\frac{1}{z_m-w_i}\bigg).
\end{equation*}
This shows that $t(z)$ does not have any singularities at $z= w_i$;
moreover, since the $c_m$ are the eigenvalues of the Gaudin
Hamiltonians, it depends only on this spectrum. In particular, the
Gaudin eigenvalues can be read off explicitly as the negative of the
residue of the nonconstant entry of the
connection matrices $B(z)$.  Note that a horizontal section
$f=\left(\begin{smallmatrix}f_1\\f_2
  \end{smallmatrix}\right)$ to the connection in this gauge  is
determined by a solution to the linear differential equation \begin{equation}
(\partial_z^2-t(z))f_1(z)=0.
\end{equation} 
The differential operator $\partial_z^2-t(z)$ can be viewed as a
projective connection.

\subsection{Generalization to $\SL(N)$: a brief summary}\label{s:slnclassical}

We now give a brief description of the interpretation of the spectrum
of the $\sl_N$-Gaudin model in terms of $\SL(N)$-opers.

\begin{Def} A \emph{$\GL(N)$-oper} on $\mathbb{P}^1$ is a triple
  $(E,\nabla,\cL_\bullet)$, where $E$ is a rank $n$ vector bundle on
  $\mathbb{P}^1$, $\nabla:E\to E\otimes K$ is a connection, and
  $\cL_\bullet$ is a complete flag of subbundles such that $\nabla$
  maps $\cL_i$ into $\cL_{i+1}\otimes K$ and the induced maps
  $\bar{\nabla}_i:\cL_i/\cL_{i-1}\to \cL_{i+1}/\cL_i\otimes K$ are
  isomorphisms for $i=1,\dots,N-1$. The triple is called an
  \emph{$\SL(N)$-oper} if the structure group of the flat
  $\GL(N)$-bundle may be reduced to $\SL(N)$.
\end{Def}
As in the $\SL(2)$-case, one can interpret the fact that the $\bar{\nabla}_i$'s are
isomorphisms in terms of the nonvanishing of certain determinants
involving local sections of $\cL_1$.  Given a local section $s$ of
$\cL_1$, for $i=1,\dots,N$, let
\begin{equation*} \cW_i(s)(z)=\left.\left(s(z)\wedge\nabla_z
    s(z)\wedge\dots\wedge\nabla_z^{i-1} s(z)\right)\right|_{\Lambda^i\cL_{i}}
\end{equation*}
Then $(E,\nabla,\cL_\bullet)$ is an oper if and only if for each $z$,
there exists a local section of $\cL_1$ for which $\cW_i(s)(z)\ne 0$
for all $i$.  Note that $\cW_1(s)\ne 0$ simply means that $s$ locally
generates $\cL_1$.

We again will need to relax the isomorphism condition in the above
definition to allow the oper to have regular singularities.  Recall
that the weight lattice for $\SL(N)$ is the free abelian group on the
fundamental  weights $\om_1,\dots,\om_{N-1}$.  Moreover, a weight is
dominant if it is a nonnegative linear combination of the $\om_i$'s.

Fix a collection of points $z_1,\dots,z_L$ and corresponding dominant
integral weights $\lambda_1,\dots,\lambda_L$.  Write $\lambda_m=\sum
l_m^i\om_i$. We say that $(E,\nabla,\cL_\bullet)$ is an $\SL(N)$-oper
with regular singularities of weights $\lambda_1,\dots,\lambda_L$ at
$z_1,\dots,z_L$ if $(E,\nabla)$ is a flat $\SL(N)$-bundle, and each of
the $\bar{\nabla}_i$'s is an isomorphism except possibly at $z_m$,
where it has a zero of order $l_m^{i}$, and $\infty$.  The conditions at the
singularities may be expressed equivalently in terms of a nonvanishing
local section. For each $j$ with $1\le j \le N-1$, set $\Lambda_{j} =
\prod^{L}_{m=1}(z-z_m)^{l_m^{j}}$ and $\ell_m^{j}=\sum_{k=1}^{j}
l_m^{k}$.  Then, for $2\le i\le N$,
\begin{equation}
\cW_i(s)(z)\sim P_{i-1}:= \Lambda_1(z) \Lambda_2(z)\cdots\Lambda_{i-1} (z)= \prod^{L}_{m=1}(z-z_m)^{\ell_m^{i-1}}.
\end{equation}

As we saw for $\SL(2)$, to get the Bethe equations for nondominant
weights, we need to introduce Miura opers.  Again, a Miura oper is a
quadruple $(E,\nabla,\cL_\bullet,\hcL_\bullet)$ where
$(E,\nabla,\cL_\bullet)$ is an oper with regular singularities and
$\hcL_\bullet$ is a complete flag of subbundles preserved by $\nabla$.
Given a Miura oper, choose a trivialization of $E$ on
$\mathbb{P}^1\setminus\infty$ such that $\hcL_\bullet$ is the standard
flag, i.e., the flag generated by the ordered basis $e_1,\dots, e_N$.
If $s$ is a section generating $\cL_1$ on this affine line, consider
the following determinants for $i=1,\dots,N$:
\begin{equation*} \cD_i(s)(z)=e_1\wedge\dots\wedge e_{N-i}\wedge
  s(z)\wedge\nabla_z s(z)\wedge \dots\wedge\nabla_z^{i-1} s(z).
\end{equation*}
Each of these is a polynomial multiple of the volume form.  Note that
$\cD_N(s)(z)=\cW_N(s)(z)$; in particular, $\cD_N(s)(z)\ne 0$ away from
the $z_m$'s.  We will call a Miura oper \emph{nondegenerate} if the
orders of the zero of $\cD_i(s)$ and $\mathcal{W}_i(s)$ at each $z_m$
are the same and moreover, if $\cD_i(s)$ and $\cD_k(s)$ for $i\ne k$
both vanish at a point $z$, then $z=z_m$ for some $m$.

These conditions may be expressed in a more Lie-theoretic
form. Let $B$ be the upper triangular Borel subgroup of $\SL(N)$.
Under the usual identification of $\SL(N)/B$ as the variety of
complete flags, $B$ corresponds to the standard flag $\cE$.  If $\cF$
is another flag, we say that $(\cE,\cF)$ have relative position $w$
(with $w$ an element of the Weyl group $S_N$) if $\cF=g\cdot\cE$ for
some $g$ in the double coset $BwB$.  If the relative position is
$w_0$, where $w_0$ is the longest element given by the permutation
$i\mapsto N+1-i$ for all $i$, we say that the flags are in general position.

Given an ordered basis $f=(f_1,\dots,f_N)$ for $\mathbb{C}^N$, let
$Q_k(f)=e_1\wedge\dots\wedge e_{N-k}\wedge f_1\wedge\dots\wedge f_k$.
It is immediate that the zeros of the function $k\mapsto Q_k(f)$
depend only on the flag determined by $f$.  (Of course, $Q_N(f)$ is
always nonzero, since $f$ is a basis.)  Let $\sigma_k=(k\ k+1)\in
S_N$.
\begin{Lem} Let $\cF$ be a flag determined by the ordered basis
  $f=(f_1,\dots,f_N)$. 
\begin{enumerate}\item The pair $(\cE,\cF)$ are in general position if and only if $Q_j(f)\ne 0$ for all $j$.
\item The pair $(\cE,\cF)$ have relative position $w_0 \sigma_k$ if and
  only if $Q_k(f)= 0$ and $Q_j(f)\ne 0$ for all $j\ne k$.
\end{enumerate}
\end{Lem}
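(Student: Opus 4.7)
The plan is to translate both conditions into the language of subspace intersections and exploit the rank-function characterization of the relative position of two flags. Writing $\cF_j = \langle f_1, \ldots, f_j\rangle$, the key preliminary observation is that $Q_j(f) \ne 0$ if and only if $\cE_{N-j} \cap \cF_j = 0$: the $N$ vectors $e_1, \ldots, e_{N-j}, f_1, \ldots, f_j$ span $\C^N$ exactly when they are linearly independent, which is exactly when the wedge $Q_j(f)$ is nonzero. I will then use the standard fact that if $(\cE,\cF)$ has relative position $w$, then $\dim(\cE_i \cap \cF_j) = \#\{m \le j : w(m) \le i\}$, and that the rank function $(i,j) \mapsto \dim(\cE_i \cap \cF_j)$ determines the relative position uniquely.

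Part (1) then follows quickly. Plugging $w = w_0$ into the rank formula gives $\dim(\cE_i \cap \cF_j) = \max(0, i+j-N)$, so in particular $\cE_{N-j} \cap \cF_j = 0$ for every $j$ and all $Q_j$ are nonzero. Conversely, if $Q_j(f) \ne 0$ for every $j$, then $\cE_{N-j} \cap \cF_j = 0$; for any $i, j$ with $i + j \le N$ we have $\cE_i \subseteq \cE_{N-j}$, so $\cE_i \cap \cF_j \subseteq \cE_{N-j} \cap \cF_j = 0$, which propagates to give the rank function of $w_0$ and hence relative position $w_0$.

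For the forward direction of (2), I plug $w = w_0 \sigma_k$ into the rank formula and compute directly that $\dim(\cE_{N-k} \cap \cF_k) = 1$ while $\dim(\cE_{N-j} \cap \cF_j) = 0$ for $j \ne k$, giving exactly the asserted vanishing pattern for the $Q_j$. The converse is the main step: assuming $Q_j(f) \ne 0$ for $j \ne k$ and $Q_k(f) = 0$, I determine the relative position $w$ inductively. Using $\#\{m \le j : w(m) \le N-j\} = 0$ for $j \ne k$, a forward induction on $j = 1, 2, \ldots, k-1$ forces $w(j) = N+1-j$, and a parallel backward induction (or equivalently, forward induction continued for $j \ge k+2$) forces $w(j) = N+1-j$ for $j = k+2, \ldots, N$. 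The single remaining constraint at $j = k+1$ then pins down $\{w(k), w(k+1)\} = \{N-k, N-k+1\}$, leaving exactly two candidates: $w = w_0$ and $w = w_0 \sigma_k$. Since $Q_k(f) = 0$ rules out $w = w_0$ by part (1), we conclude $w = w_0 \sigma_k$.

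I expect the main obstacle to be the combinatorial bookkeeping in the inductive step, particularly for the boundary cases $k = 1$ and $k = N-1$ where one side of the induction becomes vacuous; these require careful checking, but the remaining rank constraints together with the requirement that $w$ be a permutation still leave only the two candidates $\{w_0, w_0\sigma_k\}$, so the argument goes through uniformly.
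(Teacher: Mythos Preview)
Your proof is correct and takes a genuinely different route from the paper's. The paper argues constructively: for part~(1) it uses the nonvanishing of the $Q_j$'s to inductively modify the basis $f$ into a new ordered basis $\hat f$ of $\cF$ whose reversed matrix $(\hat f_N\ \cdots\ \hat f_1)$ lies in $B$, exhibiting $\cF=b w_0\cE$ directly; for part~(2) it carries out a similar Gaussian-elimination style normalization, reads off from $Q_k(f)=0$ and $Q_{k+1}(f)\ne 0$ the pivot positions of $f_k$ and $f_{k+1}$, and produces $b\in B$ with $\cF=b w_0\sigma_k\cE$. Your approach instead invokes the standard Schubert-cell fact that the rank function $(i,j)\mapsto\dim(\cE_i\cap\cF_j)$ determines the relative position, and then does pure combinatorics on the antidiagonal values $\dim(\cE_{N-j}\cap\cF_j)$ to pin down $w$. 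The paper's argument is entirely self-contained and yields the explicit $b\in B$, while yours is cleaner and more systematic once the rank-function characterization is granted, and makes the boundary cases $k=1,\,N-1$ transparent. One small remark: in your converse to~(2) the constraint at $j=k+1$ is actually automatic once forward and backward induction leave only the values $\{N-k,N-k+1\}$ for $w(k),w(k+1)$; the dichotomy $w\in\{w_0,w_0\sigma_k\}$ comes from the pigeonhole on remaining values rather than from that constraint, but this does not affect the validity of the argument.
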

\begin{proof}  In both cases, the forward implication is an easy direct
  calculation.  For example, if $(\cE,\cF)$ are in general position,
  then $\cF=bw_0\cE$ for some $b\in B$, hence is determined by the
  ordered basis $f_i=bw_0e_i=be_{N+1-i}$.  Since $Q_j(f)\ne 0$ is
  equivalent to the fact that the projection of $\Span(f_1,\dots,f_j)$
  onto $\Span(e_{N+1-j},\dots,e_N)$ is an isomorphism, it is now
  immediate that for $(\cE,\cF)$ in general position, $Q_j(f)\ne 0$
  for all $j$.  

For the converse, first assume that $Q_j(f)\ne 0$ for
  all $j$.  One shows inductively that the basis $f$ can be
  modified to give a new ordered basis $\hf$ for $\cF$ for which the matrix
  $b=\left(\begin{smallmatrix} \hf_N & \hf_{N-1}\dots & \hf_1
    \end{smallmatrix}\right)\in B$.  Thus, $\cF=b w_0\cE$.

Now, assume that $Q_k(f)=0$, but the other $Q_j(f)$'s are nonzero. The
same argument as above shows that without loss of generality, we may
assume that for  $j=1,\dots,k-1$, $f_j$ is a column vector with lowest nonzero component in the
$N-j$ place.  We may further assume that all other $f_i$'s have bottom
$k-1$ components zero.  Since $Q_k(f)=0$, $(f_k)_{N-k}=0$.  However,
$Q_{k+1}(f)\ne 0$ now gives $(f_{k+1})_{N-k}\ne 0$ and
$(f_k)_{N-k-1}\ne 0$.  It is now clear that the flag $\cF$ is
determined by an ordered basis $\hf$ for which 
$b=\left(\begin{smallmatrix} \hf_N & \dots \hf_{k} & \hf_{k+1}\dots & \hf_1
    \end{smallmatrix}\right)\in B$.  This means that $\cF= b w_0 w_k\cE$.

\end{proof}

Returning to our Miura oper, recall that $s(z),\nabla_z
s(z),\dots,\nabla_z^{N-1} s(z)$ is an ordered basis for the flag
$\cL(z)$ as long as $z$ is not a singular point.  If we denote this
basis by $\bm{s}(z)$, we see that
$\cD_i(s)(z)=Q_i(\bm{s}(z))$.  The lemma now shows that the fact
that the $\cD_i(s)$'s have no roots in common outside of regular
singularities is equivalent to the statement that the relative
position of $(\hcL_\bullet(z),\cL_\bullet(z))$ is either $w_0$ or $w_0
\sigma_k$ for some $k$.  Furthermore, $s(z),(z-z_m)^{-l_m^1}\nabla_z
s(z),\dots,(z-z_m)^{-l_m^{N-1}}\nabla_z^{N-1}s(z)$ is an ordered basis
for $\cL_\bullet$ at $z_m$. Hence, $\cD_i(s)(z)$ and $\cW_i(s)(z)$
having zeros of the same order at $z_m$ is equivalent to the fact that
the flags $\hcL_\bullet(z_m)$ and $\cL_\bullet(z_m)$ are in general
position.

The determinant conditions for the zeros of $\mathcal{D}_k(s)$ lead
to Bethe equations in the same way as before \cite{Frenkel:aa}: 
\begin{equation}\label{Betheeqsln}
\sum^L_{i=1}\frac{\langle \lambda_i, \check{\alpha}_{i_j}\rangle}{w_j-z_i}=\sum _{s\neq j}
\frac{\langle \check{\alpha}_{i_s}, \check{\alpha}_{i_j}\rangle}{w_s-w_j}
\end{equation}
where the $w_j$'s are distinct points corresponding to zeros of the
determinants $\mathcal{D_k}(s)$. 

We can now state the $\SL(N)$ analogue of Theorem~\ref{Miurasl2}.
Here, $\lambda_\infty$ is a dominant weight determined by the
$\lambda_i$'s and the $\alpha_{i_j}$'s.

\begin{Thm} There is a one-to-one correspondence between the set of
  solutions to the Bethe ansatz equations \eqref{Betheeqsln} and the
  set of nondegenerate $\SL(N)$-Miura opers with trivial monodromy and
  regular singularities at the points $z_1, \dots, z_L,\infty$ with
  weights $\lambda_1, \dots, \lambda_L,\lambda_\infty$.
\end{Thm}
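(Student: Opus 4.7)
The plan is to generalize the argument used for Theorem~\ref{Miurasl2}. First I would fix a nondegenerate $\SL(N)$-Miura oper with the stated properties and choose a trivialization of $E$ on $\mathbb{P}^1\setminus\infty$ in which $\hat{\cL}_\bullet$ is the standard flag and $\nabla$ is trivial. The line subbundle $\cL_1$ is generated by a column $s=(s_1,\dots,s_N)^T$ of polynomials without common roots, so both $\cW_k(s)$ and $\cD_k(s)$ become honest polynomials in $z$. The oper condition and the definition of regular singularities force $\cW_k(s)(z)=P_{k-1}(z)$ up to a nonzero scalar (absorbable by a diagonal gauge change), while the nondegeneracy criterion together with the lemma on relative flag positions gives the factorization
$$\cD_k(s)(z) \;=\; P_{k-1}(z)\, Q_k(z),$$
where each $Q_k$ has simple roots, none equal to any $z_m$, and where the zero sets of $Q_k$ and $Q_j$ are disjoint for $k\ne j$. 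I would label the roots of $Q_k$ by the Bethe variables $w_j$ with $i_j=k$, so the simple root attached to $w_j$ is precisely $\alpha_{i_j}$.

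Next I would derive the Bethe equations by a residue computation that parallels the $\SL(2)$ use of the identity $\partial_z(q_+/q_-)=-\rho/q_-^2$. The higher-rank input is a standard Pl\"ucker-type differential identity linking $\cD_{k-1},\cD_k,\cD_{k+1}$ with the Wronskians $\cW_{k-1},\cW_k,\cW_{k+1}$; substituting $\cD_k=P_{k-1}Q_k$ and $\cW_k=P_{k-1}$ and passing to the logarithmic derivative at a root $w_j$ of $Q_{i_j}$ shows that the residue condition there becomes exactly \eqref{Betheeqsln}. The coefficients on the left-hand side arise from $\partial_z\log P_{k-1}$ and encode $\langle\lambda_m,\check{\alpha}_{i_j}\rangle=l_m^{i_j}$, while the Cartan pairings on the right-hand side emerge as follows: differentiating the $Q_k$ factor contributes $+2$, the neighboring $Q_{k\pm 1}$ factors contribute $-1$, and more distant factors contribute $0$, reproducing the entries of the Cartan matrix $\langle\check{\alpha}_{i_s},\check{\alpha}_{i_j}\rangle$.

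For the converse direction, given a solution $\{w_j\}$ of \eqref{Betheeqsln}, I would set $Q_k(z)=\prod_{i_j=k}(z-w_j)$, construct a column $s$ on $\mathbb{P}^1\setminus\infty$ whose osculating data have prescribed $\cW_k$ and $\cD_k$, and define $\cL_\bullet$ as the flag spanned by $s,\nabla_z s,\dots$. The Bethe equations translate precisely into the regularity of the corresponding logarithmic derivatives at each $w_j$, which in turn guarantees the oper condition, the Miura condition, and nondegeneracy. Trivial monodromy is automatic since $s$ is polynomial in the trivialization, and the dominant weight $\lambda_\infty$ at infinity is read off from the orders of the $\cW_k(s)$ in the coordinate $\tilde{z}=1/z$, exactly as in the $\SL(2)$ computation.

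The main obstacle I anticipate is the careful combinatorial bookkeeping required to identify the contribution of each neighboring level $k\pm 1$ in the Pl\"ucker identity with the correct Cartan entry, uniformly in $k$, and to verify that the weight $\lambda_\infty$ produced at infinity is genuinely dominant. A secondary technical point is showing that the Miura oper constructed in the converse is independent of the freedom available in lifting a flag $\cL_\bullet$ to a generating section $s$, so that the map from Bethe solutions to nondegenerate Miura opers is not just surjective but bijective.
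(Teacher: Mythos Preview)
The paper does not actually prove this theorem. It is presented as a summary of results of Frenkel~\cite{Frenkel:aa,Frenkel:ab}; the only argument offered is the sentence ``The determinant conditions for the zeros of $\mathcal{D}_k(s)$ lead to Bethe equations in the same way as before,'' followed by the statement. Your outline is therefore not competing with a proof in the paper but rather filling in what the paper omits, and it follows the route the paper gestures at.

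That said, the sketch has one genuine lacuna. The ``standard Pl\"ucker-type differential identity linking $\cD_{k-1},\cD_k,\cD_{k+1}$'' is the entire technical content of the forward direction, and you neither state it nor indicate how to prove it. The identity you need is the classical Wronskian version of the Desnanot--Jacobi relation used later in Section~\ref{Sec:SLNqOps}: one must introduce auxiliary determinants $\widetilde{\cD}_k(s)$ obtained by replacing $e_{N-k}$ by $e_{N-k+1}$ in the definition of $\cD_k(s)$, and then the relation takes the form $\cD_k\,\partial_z\widetilde{\cD}_k-\widetilde{\cD}_k\,\partial_z\cD_k=\cD_{k-1}\cD_{k+1}$ (up to a sign and the $P$-factors). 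Without this, there is nothing to take residues of, and your claim that ``differentiating the $Q_k$ factor contributes $+2$, the neighboring $Q_{k\pm 1}$ factors contribute $-1$'' is an assertion rather than a computation. Relatedly, simplicity of the roots of $Q_k$ does not follow from the relative-position lemma alone (that lemma only says at most one $\cD_k$ vanishes at a given nonsingular point); it follows from the identity just mentioned together with the nonvanishing of $\cD_{k\pm1}$ there, exactly as in the $\SL(2)$ argument after \eqref{Wron}.

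A smaller point: your worry about verifying that $\lambda_\infty$ is dominant is well placed, but note that in the $\SL(2)$ Miura statement (Theorem~\ref{Miurasl2}) no weight at infinity is imposed at all. The paper's phrasing for $\SL(N)$ is not entirely parallel to its own $\SL(2)$ version, so some care is needed in deciding exactly what is being asserted before trying to prove it.
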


\subsection{Irregular singularities}\label{sec:irrsing}

In this section, we recall the relationship between opers with
irregular as well as regular singularities and an inhomogeneous
version of the Gaudin model introduced in
\cite{FFTL:2010,Rybnikov:2010}.  Here, we will only consider the
simplest case of a double pole irregularity at $\infty$.  We also
restrict the discussion to $\SL(2)$.

Let $(E,\nabla,\cL)$ be an $\SL(2)$-oper with regular singularities on
$\P^1\setminus\infty$ whose underlying connection is gauge equivalent
to $d+\bfa\,dz$, where $\bfa=\diag(a,-a)$ with $a\ne 0$.  Changing
variables to $1/z$, we see that this connection has a double pole at
$\infty$.  It is no longer possible to trivialize the connection
algebraically, but it can be trivialized using the exponential
transformation $h(z)=e^{\bfa z}$.  If we let
$\left(\begin{smallmatrix} q_{+}(z)\\q_{-}(z)
  \end{smallmatrix}\right)$ be a section generating the line bundle
$\cL$  (so  $q_{+}(z)$ and $q_{-}(z)$ are polynomials with no common
zeros), then in the trivial gauge, this section becomes
  \begin{equation*}s(z)=e^{-{\bfa}z}\begin{pmatrix}
   q_+(z)  \\
   q_-(z)  
 \end{pmatrix}.
\end{equation*}
Note that we cannot assume that
$\deg(q_{-})<\deg(q_{+})$, since the necessary constant gauge changes
do not preserve $d+\bfa\,dz$.  However, we can assume that $q_{-}$ is
monic: $q_{-}(z)=\prod^{l_{-}}_{i=1}(z-w_i)$. 

The condition $s(z)\wedge \nabla_z s(z)=\rho(z)$ gives  a ``twisted''
form of the Wronskian:
\begin{equation}\label{twron}
  q_{+}(z)\partial_zq_-(z)-   q_{-}(z)\partial_zq_+(z)+2aq_{+}(z)q_{-}(z)=\rho(z)
\end{equation}
As before, we assume this oper is nondegenerate, i.e., $q_-(z_m)\ne 0$
for all $m$; again, this implies that the zeros of $q_-$ are simple.

To compute the Bethe ansatz equations, we observe that after
multiplying \eqref{twron} by $-e^{-2a z}/(q_-(z))^2$, we obtain 
\begin{equation} \partial_z\left(-e^{-2a z}\frac{q_+(z)}{q_-(z)}\right)=\frac{e^{-2a z}\rho(z)}{q_-(z)^2}.
\end{equation}
Taking residues at each $w_i$ now leads to the inhomogeneous Bethe equations
\begin{equation}\label{gbetheir}
-2a+\sum_m\frac{k_n}{z_n-w_i}=\sum_{j\neq i}\frac{2}{w_j-w_i}, \qquad i=1,\dots, l_-.
\end{equation}

We thus obtain the following theorem:
\begin{Thm}
  There is a one-to-one correspondence between the set of solutions of
  the inhomogeneous Bethe equations \eqref{gbetheir} and the set of
  nondegenerate $\SL(2)$-opers with regular singularities at the
  points $z_1, \dots, z_L$ of weights $k_1, \dots, k_L$ at the points
  $z_1, \dots, z_L$ and with a double pole with $2$-residue $-\bfa$.
\end{Thm}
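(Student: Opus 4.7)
The plan is to mirror the proofs of Theorems~\ref{operthm} and \ref{Miurasl2}, with the twisted Wronskian relation \eqref{twron} playing the role of the untwisted Wronskian \eqref{Wron}. For the forward direction, I start from a nondegenerate $\SL(2)$-oper with the prescribed singularity data, use the exponential trivialization $h(z)=e^{\bfa z}$ to reduce the underlying connection to the trivial one on $\P^1\setminus\infty$, and write a generating section of $\cL$ in the normalized form $s(z)=e^{-\bfa z}(q_+(z),q_-(z))^T$ with $q_-$ monic. The oper condition away from the $z_m$'s is exactly \eqref{twron}; dividing by $-e^{2az}q_-(z)^2$ rewrites it as $\partial_z(-e^{-2az}q_+/q_-)=e^{-2az}\rho(z)/q_-(z)^2$. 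Since the left-hand side is an exact derivative of a meromorphic function, its residue at each zero $w_i$ of $q_-$ must vanish, and a direct Laurent expansion of the right-hand side at $w_i$ yields exactly \eqref{gbetheir}. Nondegeneracy translates into the statement that $q_-(z_m)\neq 0$ and the zeros of $q_-$ are simple.

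For the reverse direction, starting from a solution $(w_1,\dots,w_{l_-})$ of \eqref{gbetheir}, I set $q_-(z)=\prod_i(z-w_i)$ and must construct a polynomial $q_+$, coprime to $q_-$, for which \eqref{twron} holds. The strategy is to carry out the partial fraction decomposition
\[
\frac{\rho(z)}{q_-(z)^2} \;=\; R(z) \;+\; \sum_{i=1}^{l_-}\left[\frac{A_i}{(z-w_i)^2}+\frac{B_i}{z-w_i}\right]
\]
and then integrate $e^{-2az}\rho(z)/q_-(z)^2$. Integration by parts on the double-pole term contributes $-A_i e^{-2az}/(z-w_i)$ plus a non-elementary integral $\int e^{-2az}/(z-w_i)\,dz$ weighted by $-2aA_i+B_i$, so elementarity requires $-2aA_i+B_i=0$ for each $i$. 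I then verify by matching Laurent expansions at each $w_i$ that these equations are precisely \eqref{gbetheir}. The resulting antiderivative therefore has the form $-e^{-2az}q_+(z)/q_-(z)$ for a rational function with denominator dividing $q_-$; the Wronskian identity together with growth control at infinity forces $q_+$ to be a polynomial. The line subbundle generated by $e^{-\bfa z}(q_+,q_-)^T$ then supplies the required oper. Uniqueness of $q_+$ given $q_-$ is immediate: any two solutions differ by $v$ satisfying $vq_-'-v'q_-+2avq_-=0$, which integrates to $v=Ce^{2az}q_-$, forcing $C=0$ by polynomiality.

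The main obstacle is the precise verification that the residue-vanishing conditions on $e^{-2az}\rho/q_-^2$ reproduce \eqref{gbetheir} with the right signs; this requires combining the logarithmic derivatives of $\rho$ and $q_-^2$ with the $-2a$ contribution coming from differentiating $e^{-2az}$, and collecting terms carefully. Once this Laurent-expansion bookkeeping is done, the remaining claims—bijectivity of the two constructions, the correct weight structure at each $z_m$, and the double-pole behavior at $\infty$ with $2$-residue $-\bfa$—follow the same pattern as in Theorems~\ref{operthm} and \ref{Miurasl2}, with the trivialization $h(z)=e^{\bfa z}$ accounting for the irregular singularity.
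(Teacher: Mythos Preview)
Your approach is essentially the same as the paper's: the forward direction via the twisted Wronskian \eqref{twron}, rewritten as an exact derivative and then residue-computed at the $w_i$, matches the paper's argument verbatim. The paper in fact gives only this forward direction explicitly (``We thus obtain the following theorem''), so your reverse construction via partial fractions and the elementarity condition $B_i-2aA_i=0$ supplies detail the paper omits; it is correct, and once the non-elementary integrals cancel, the antiderivative $e^{-2az}[P(z)-\sum_i A_i/(z-w_i)]$ already exhibits $q_+$ as a polynomial without any appeal to growth at infinity.
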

Although we will not state it explicitly, there is a similar result
for $\SL(N)$.  Applying the methods of Section~\ref{s:slnclassical},
one shows that twisted Wronskians arise from nondegenerate
$\SL(N)$-opers with regular singularities at fixed finite points and a
double pole with regular semisimple $2$-residue at infinity.  One now
obtains solutions of the Bethe equations from these twisted
Wronskians.

We remark that for the opers considered in this section, there is no
longer an entire flag variety of associated Miura opers.  Indeed, the
only line bundles $\hcL$ preserved by $d+\bfa\, dz$ are those
generated by $e_1$ and $e_2$.  More generally, consider an
$\SL(N)$-oper with underlying connection $d+A\,dz$, where $A$ is a
diagonal matrix with distinct eigenvalues. The
flags $\hcL_\bullet$ preserved by this connection are precisely those
generated by ordered bases obtained by permuting the standard basis.
Hence, the associated Miura opers are parameterized by the Weyl group.

\section{$(\SL(2),q)$-opers}\label{Sec:SL2qOps}
\subsection{Definitions}

We now consider a $q$-deformation of the set-up in the previous
section.  It involves a difference equation version of connections and
opers.

Fix $q\in\C^*$.  Given a vector bundle $E$ over $\P^1$, let $E^q$
denote the pullback of $E$ under the map $z\mapsto qz$.  We will always
assume that $E$ is trivializable.  Consider a
map of vector bundles $A:E\to E^q$.  
Upon picking a trivialization, the map $A$ is determined by a matrix
$A(z)\in\gl(N,\C(z))$ giving the linear map $E_z\to E_{qz}$ in the given
bases.  A change in trivialization by $g(z)$ changes the
matrix via 
\begin{equation}\label{qgauge}A(z)\mapsto g(qz)A(z)g^{-1}(z);
\end{equation}
thus, $q$-gauge change is twisted conjugation. Let $D_q:E\to E^q$ be
the operator that takes a section $s(z)$ to $s(qz)$.  We associate the
map $A$ to the difference equation
$D_q(s)=As$.

\begin{Def} A meromorphic \emph{$(\GL(N),q)$-connection} over $\P^1$
  is a pair $(E,A)$, where $E$ is a (trivializable) vector bundle of
  rank $N$ over $\P^1$ and $A$ is a meromorphic section of the sheaf
  $\Hom_{\cO_{\P^1}}(E,E^q)$ for which $A(z)$ is invertible, i.e. lies
  in $\GL(N,\C(z))$.  The pair
  $(E,A)$ is called an $(\SL(N),q)$-connection if there exists a
  trivialization for which $A(z)$ has determinant $1$.
\end{Def}
For simplicity, we will usually omit the word `meromorphic' when
referring to $q$-connections.

\begin{Rem} More generally, if $G$ is a complex reductive group, one
  can define a meromorphic $(G,q)$-connection over $\P^1$ as a pair
  $(\cG,A)$ where $\cG$ is a principal $G$-bundle over $\P^1$ and $A$
  is a meromorphic section of $\Hom_{\cO_{\P^1}}(\cG,\cG^q)$.
\end{Rem}

Next, we define a $q$-analogue of opers.  In this section, we will
restrict to type $A_1$.

\begin{Def} A \emph{$(\GL(2),q)$-oper} on $\P^1$ is a triple
  $(E,A,\cL)$, where $(E,A)$ is a $(\GL(2),q)$-connection and $\cL$ is
  a line subbundle such that the induced map $\bar{A}:\cL\to (E/\cL)^q$ is
  an isomorphism.  The triple is called an \emph{$(\SL(2),q)$-oper} if
  $(E,A)$ is an $(\SL(2),q)$-connection.
\end{Def}

The condition that $\bar{A}$ is an isomorphism can be made explicit in
terms of sections.  Indeed, it is equivalent to $$s(qz)\wedge A(z)
s(z)\neq 0$$ for $s(z)$ any section generating $\cL$ over either of
the standard affine coordinate charts.

From now on, we assume that $q$ is not a root of unity.  We want to
define a $q$-analogue of the opers considered in
Section~\ref{sec:irrsing}.  First, we introduce the notion of a
$q$-oper with regular singularities.  Let $z_1,\dots,z_L\ne 0,\infty$
be a collection of points such that $q^\Z z_m\cap q^\Z z_n=\varnothing$
for all $m\ne n$.
\begin{Def}
A \emph{$(\SL(2),q)$-oper with regular singularities} at the points $z_1,\dots,
z_L\neq 0, \infty$ with weights $k_1, \dots k_L$ is a meromorphic
$(\SL(2),q)$-oper $(E,A,\cL)$ for which $\bar{A}$
is an isomorphism  everywhere on $\P^1\setminus\{0,\infty\}$ except at the points $z_m$, $q^{-1}z_m$,
$q^{-2}z_m$, \dots , $q^{-k_m+1}z_m$ for $m\in \{1, \dots, L\}$, 
where it has simple zeros.
\end{Def}

The second condition can be restated in terms of a section $s(z)$
generating $\cL$ over $\P^1\setminus \infty$: $s(qz)\wedge A(z) s(z)$
has simple zeros at $z_m$, $q^{-1}z_m$, $q^{-2}z_m$, \dots ,
$q^{-k_m+1}z_m$ for every $m\in \{1, \dots, L\}$ and has no other
finite zeros.

Next, we define twisted $q$-opers; these are $q$-analogues of the
opers with a double pole singularity considered in
Section~\ref{sec:irrsing}. Let $Z=\diag(\ze,\ze^{-1})$ be a diagonal
matrix with $\ze\ne\pm 1$.
\begin{Def} A $(\SL(2),q)$-oper $(E,A,\cL)$ with regular singularities
  is called a \emph{$Z$-twisted $q$-oper} if $A$ is gauge-equivalent
  to $Z^{-1}$.
\end{Def}

Finally, we will need the notion of a \emph{Miura $q$-oper}.  As in
the classical case, this is a quadruple $(E,A,\cL,\hcL)$ where
$(E,A,\cL)$ is a $q$-oper and $\hcL$ is a line bundle preserved by $A$.

For the rest of Section~\ref{Sec:SL2qOps}, $(E, A,\mathcal{L})$ will be a
$Z$-twisted $(\SL(2),q)$-connection with regular singularities at
$z_1,\dots, z_L\neq 0, \infty$ having (nonnegative) weights $k_1, \dots k_L$.

\subsection{The quantum Wronskian and the Bethe ansatz}

Choose a trivialization for which the $q$-connection matrix is
$Z^{-1}$.  Since $\cL$ is trivial on $\P^1\setminus\infty$, it is
generated by a section 
\begin{equation} s(z)=\begin{pmatrix} Q_+(z)\\Q_-(z)
  \end{pmatrix},
\end{equation}
where  $Q_+(z)$ and $Q_-(z)$ are polynomials without common roots.
The regular singularity condition on the $q$-oper becomes an explicit
equation for the \emph{quantum Wronskian}:
 
\begin{equation}\label{eq:qOpDefSL2}\zeta^{-1} Q_{+}(z)Q_{-}(qz)-\zeta
  Q_{+}(qz)Q_{-}(z)=\rho(z):=\prod^{L}_{m=1}\prod^{k_{m}-1}_{j=0}(z-q^{-j}z_m).
\end{equation}

We can assume that $\rho$ is monic, since we can multiply $s$ by a
nonzero constant.  We are also free to perform a constant diagonal
gauge transformation, since this leaves the $q$-connection matrix
unchanged.  Thus, we may assume that $Q_-$ is monic, say
$Q_-(z)=\prod_{i=1}^{l-} (z-w_i)$.

We now restrict attention to \emph{nondegenerate} $q$-opers.  This
means the $q^\Z$-lattices generated by the roots of $\rho$ and $Q_{-}$
do not overlap, i.e., $q^\Z z_m\cap q^\Z w_i=\varnothing$ for all $m$
and $i$.  Note that this condition implies that $w_j\ne qw_i$ for all
$i,j$;   if $w_j=q w_i$, then  
 \eqref{eq:qOpDefSL2} shows that $w_i$ would be a common
 zero of $\rho$ and $Q_{-}$.

Evaluating \eqref{eq:qOpDefSL2} at $q^{-1}z$ gives
$\rho(q^{-1}z)=\zeta^{-1}Q_{+}(q^{-1}z)Q_{-}(z)-\zeta
Q_{+}(z)Q_{-}(q^{-1}z)$.  If we divide \eqref{eq:qOpDefSL2} by this
equation and evaluate at the zeros of $Q_-$, we obtain the following
constraints:
\begin{equation}
\frac{\rho(w_i)}{\rho(q^{-1}w_i)}=-\zeta^{-2}\frac{Q_-(qw_i)}{Q_-(q^{-1}w_i)}, 
\end{equation}
or more explicitly, setting $k=\sum{k_m}$, 
\begin{equation} q^{k}\prod_{m=1}^L\frac{w_i-q^{1-k_m}z_m}{w_i-q z_m}=-\zeta^{-2}\prod^{l_-}_{j=1}\frac{q w_i-w_j}{q^{-1}w_i-w_j}.
\end{equation}

Rewriting this equation, we obtain the
$\sl_2$ XXZ Bethe equations (see e.g. \cite{Reshetikhin:1987}):
\begin{equation}\label{sl2qbethe} \prod_{m=1}^L\frac{w_i-q^{1-k_m}z_m}{w_i-q
    z_m}=-\zeta^{-2}q^{l_{-} - k}\prod^{l_-}_{j=1}\frac{q
    w_i-w_j}{w_i-q w_j},\qquad i=1,\dots,l_{-}.
\end{equation}

We call a solution of the Bethe equations nondegenerate if the $q^\Z$
lattices generated by the $w_i$'s and $z_m$'s are disjoint for all $i$
and $m$.  We have proven the following theorem:

\begin{Thm} There is a one-to-one correspondence between the set of
  nondegenerate solutions of the $\sl_2$ XXZ Bethe equations \eqref{sl2qbethe} and
  the set of nondegenerate $Z$-twisted $(\SL(2),q)$-opers with regular singularities at the points $z_1,\dots, z_L\neq 0, \infty$ with weights $k_1, \dots k_L$.
\end{Thm}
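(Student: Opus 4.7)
The plan is to split the statement into two directions. The forward direction is already essentially complete in the computation immediately above the theorem; I need only assemble it as a bijection. The reverse direction requires reconstructing $Q_+$ from the Bethe data.

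\emph{Forward direction.} Given a nondegenerate $Z$-twisted $(\SL(2),q)$-oper $(E,A,\cL)$, I would choose a trivialization with $A = Z^{-1}$, let $s = (Q_+, Q_-)^T$ generate $\cL$ over $\P^1 \setminus \infty$, and normalize so that $\rho$ and $Q_-$ are monic, with $Q_-(z) = \prod_i(z-w_i)$. The nondegeneracy of the $q$-oper guarantees that each $w_i$ and each $q^{-1}w_i$ avoids the roots of $\rho$, so both the quantum Wronskian identity \eqref{eq:qOpDefSL2} and its shift by $z \mapsto q^{-1}z$ evaluate nontrivially at $w_i$. Taking ratios, as done in the paragraphs preceding the theorem, yields \eqref{sl2qbethe}, and by construction the $w_i$'s form a nondegenerate Bethe solution.

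\emph{Reverse direction.} Given a nondegenerate Bethe solution $(w_1, \dots, w_{l_-})$, set $Q_-(z) = \prod_i (z - w_i)$ and $\rho(z) = \prod_m \prod_{j=0}^{k_m - 1}(z - q^{-j}z_m)$, and seek a polynomial $Q_+$ of degree $l_+ := k - l_-$ making \eqref{eq:qOpDefSL2} hold. Dividing both sides by $Q_-(z)Q_-(qz)$, one wants a rational function $F = Q_+/Q_-$ with poles supported on $\{w_1, \dots, w_{l_-}\}$ satisfying
$$\zeta^{-1} F(z) - \zeta F(qz) = \frac{\rho(z)}{Q_-(z) Q_-(qz)}.$$
Matching the residue at each $w_i$ forces $Q_+(w_i) = \zeta \rho(w_i)/Q_-(qw_i)$; the Bethe equation at $w_i$ is precisely the condition that this value is simultaneously consistent with the residue of the right hand side at $z = q^{-1}w_i$. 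Having fixed these $l_-$ values, $Q_+$ is reconstructed, for instance by interpolating against these values and fitting the remaining top coefficients by matching polynomial degrees on both sides of \eqref{eq:qOpDefSL2}.

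The main obstacle I expect is confirming that the reconstructed $Q_+$ really is a polynomial of the predicted degree with $\gcd(Q_+, Q_-) = 1$. For the degree, matching leading coefficients in \eqref{eq:qOpDefSL2} requires $l_+ + l_- = k$ together with $\zeta^{-1}q^{l_-} - \zeta q^{l_+}\ne 0$; the latter is a genuine constraint on $(\zeta, q, l_-, l_+)$ that follows from $q$ not being a root of unity and $\zeta \ne \pm 1$ in generic situations, and would need to be flagged as a side hypothesis if not. Coprimality is automatic from the quantum Wronskian: a common zero $w$ of $Q_+$ and $Q_-$ would force $\rho(w) = 0$, contradicting the nondegeneracy of the Bethe solution. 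Combined with the manifest inverse relationship between the two constructions, this yields the asserted bijection.
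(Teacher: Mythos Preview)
Your forward direction is exactly what the paper does; in fact, the paper's entire argument is the computation you summarize, after which it simply declares the theorem proven. The reverse direction is not spelled out for $\SL(2)$ in the paper at all, so your proposal is already more complete than the original.

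For the reverse direction, your idea is correct and is essentially the $N=2$ specialization of what the paper later proves as Lemma~\ref{eq:PropQQtilde} for general $\SL(N)$. However, your reconstruction of $Q_+$ via ``interpolation plus fitting top coefficients'' is too loose to close the argument. The clean way is the one you almost set up: write $g = Q_+/Q_-$ and solve
\[
\zeta^{-1}g(z) - \zeta\, g(qz) = \frac{\rho(z)}{Q_-(z)Q_-(qz)}
\]
by partial fractions. The Bethe equations are precisely the consistency condition matching the residues at $w_i$ and at $q^{-1}w_i$; the polynomial part $\tilde g(z)=\sum_i r_i z^i$ is then determined coefficient-by-coefficient from $(\zeta^{-1}-\zeta q^i)r_i = s_i$. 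This yields both existence and \emph{uniqueness} of $Q_+$, but it requires $\zeta^2 \ne q^{-i}$ for every relevant $i\ge 0$, not merely the single leading-coefficient condition $\zeta^{-1}q^{l_-}\ne \zeta q^{l_+}$ that you isolated. You were right to flag this as a side hypothesis: the paper omits it from the $\SL(2)$ statement but does impose the analogue (the condition $\kappa_k \notin q^{\mathbb N_0}\kappa_{k+1}$, and later the stronger disjoint-$q^{\Z}$-lattice condition on the twist parameters) in the $\SL(N)$ theorem. Without it, $Q_+$ is not unique and the map from Bethe solutions to $q$-opers is not well defined.

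One further loose end: your residue argument tacitly assumes the $w_i$ are simple zeros of $Q_-$. On the $q$-oper side the paper deduces $w_j\ne qw_i$ from the quantum Wronskian, but neither the paper's definition of a nondegenerate Bethe solution nor the structure of the equations obviously forces the $w_i$ to be distinct. You should either fold distinctness into the nondegeneracy hypothesis or supply a separate argument.
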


\subsection{The $q$-Miura transformation and the transfer matrix}
We now consider the $q$-Miura transformation which puts the
$q$-connection matrix into a form analogous to \eqref{e:miuraform} in
the classical setting.  As we will see, the eigenvalue of the transfer
matrix for the XXZ model will appear explicitly in the $q$-connection matrix.

First, we consider the gauge change by \begin{equation}
g(z)=\begin{pmatrix}
   Q_-(z) & -Q_+(z)\\
   0 &  Q_-^{-1}(z)
 \end{pmatrix},
\end{equation} which takes the section
$s(z)$ into $g(z)s(z)=\left(\begin{smallmatrix}0\\1
  \end{smallmatrix}\right)$.
In this gauge, the $q$-connection matrix has the form
\begin{equation}\label{acon}\begin{aligned} A(z)&=\begin{pmatrix} Q_-(qz)\zeta^{-1} & -\zeta Q_+(qz)\\
   0 &  \zeta Q_-^{-1}(qz)
 \end{pmatrix}
\begin{pmatrix}  Q_-(z) & -Q_+(z)\\
   0 &  Q_-^{-1}(z)
 \end{pmatrix}^{-1}\\&=\begin{pmatrix} \zeta^{-1}Q_-(qz)Q^{-1}_-(z)&\rho(z)\\
   0 &  \zeta Q_-^{-1}(qz)Q_-(z)
 \end{pmatrix},
\end{aligned}
\end{equation}
where $\rho$ is the quantum Wronskian.

Before proceeding, we recall that every eigenvalue of the \emph{transfer matrix} for
the XXZ model has the form (see \cite{Reshetikhin:2010si}, Section 4 and references therein)
\begin{equation}
T(z)=\zeta^{-1}\rho(q^{-1}z)\frac{Q_-(qz)}{Q_-(z)}+\zeta \rho(z)\frac{Q_-(q^{-1}z)}{Q_-(z)}.
\end{equation}
For ease of notation, we set $a(z)=\zeta^{-1}Q_-(qz)Q^{-1}_-(z)$, so
that $A(z)=\left(\begin{smallmatrix}a(z)& \rho(z)\\
   0 &  a^{-1}(z)
 \end{smallmatrix}\right)$ and $T(z)=a(z)\rho(q^{-1}z)+a^{-1}(q^{-1}z)\rho(z)$.  We now apply the gauge transformation by
the matrix $\left(\begin{smallmatrix} 1 & 0\\ a(z)/\rho(z) & 1
  \end{smallmatrix}\right)$; this brings the $q$-connection into the
form 
\begin{equation}\label{mer}
\hat{A}(z)=\begin{pmatrix}
   0& \rho(z)\\
   -\rho^{-1}(z) &  T(qz)\rho^{-1}(qz)
 \end{pmatrix}.
\end{equation}
If $\left(\begin{smallmatrix} f_1\\ f_2  \end{smallmatrix}\right)$ is
a solution of the corresponding difference equation, then we have
$D_q(f_1)=\rho(z)f_2$ and $D_q(f_2)= -\rho^{-1}(z)f_1+
T(qz)\rho^{-1}(qz)f_2$.  Simplifying, we see that $f_1$ is a solution
of the second-order scalar difference equation
\begin{equation}\label{scalar} \left(D_q^2-T(qz)D_q-\frac{\rho(qz)}{\rho(z)}\right)f_1=0.
\end{equation}
Summing up, we have

\begin{Thm}
 Nondegenerate $Z$-twisted $(\SL(2),q)$-opers with regular
 singularities at the points $z_1,\dots, z_n\neq 0, \infty$ with
 weights $k_1, \dots k_n$ may be represented by meromorphic
 $q$-connections of the form \eqref{mer} or equivalently, by the
 second-order scalar difference operators \eqref{scalar}.
\end{Thm}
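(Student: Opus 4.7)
The plan is to carry out explicitly the two successive $q$-gauge transformations sketched in the discussion preceding the theorem and then convert the resulting first-order matrix system into the scalar difference equation \eqref{scalar}. The content of the statement is essentially a computational reformulation, so the main task is to verify that the chain of gauge changes is well-defined and produces exactly the claimed canonical forms.

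First, given a nondegenerate $Z$-twisted $(\SL(2),q)$-oper $(E,A,\cL)$ with regular singularities, I would fix the trivialization in which $A(z)=Z^{-1}$ and $\cL$ is generated by $s(z)=(Q_+(z),Q_-(z))^{T}$, with $Q_-$ monic and $Q_\pm$ coprime; these satisfy the quantum Wronskian identity \eqref{eq:qOpDefSL2}. Applying the $q$-gauge transformation by $g(z)$ defined in the text and using the twisted conjugation rule \eqref{qgauge} yields the upper-triangular matrix in \eqref{acon}. The only nontrivial arithmetic check is that the $(1,2)$-entry of $g(qz)Z^{-1}g(z)^{-1}$ equals $\rho(z)$, which is exactly the quantum Wronskian relation. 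Next, I would apply the unipotent gauge transformation by $\left(\begin{smallmatrix}1 & 0 \\ a(z)/\rho(z) & 1\end{smallmatrix}\right)$ with $a(z)=\zeta^{-1}Q_-(qz)/Q_-(z)$ and again invoke \eqref{qgauge}. A direct matrix multiplication reduces the $(1,1)$-entry to $0$, the $(2,1)$-entry to $-\rho(z)^{-1}$, and, after rewriting $T(qz)=a(qz)+a(z)^{-1}\rho(qz)/\rho(z)$ from the definition of the transfer matrix shifted to $qz$, the $(2,2)$-entry to $T(qz)\rho(qz)^{-1}$, yielding exactly \eqref{mer}. The nondegeneracy hypothesis ensures that the $q^\Z$-lattices of zeros of $Q_-(z)$ and of $\rho(z)$ are disjoint, so the gauge changes are meromorphic on $\P^1\setminus\{0,\infty\}$ with no spurious cancellations.

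Finally, to pass from \eqref{mer} to \eqref{scalar}, I would write the first-order system $D_q f=\hat A(z)f$ componentwise as $D_q f_1=\rho(z)f_2$ and $D_q f_2=-\rho(z)^{-1}f_1+T(qz)\rho(qz)^{-1}f_2$, solve the first equation for $f_2=\rho(z)^{-1}D_q f_1$, and substitute into the second, using the elementary shift $D_q\circ\rho(z)^{-1}=\rho(qz)^{-1}\circ D_q$, to arrive at \eqref{scalar}. Conversely, any solution $f_1$ of \eqref{scalar} determines $f_2$ by the same formula and thereby a solution of the matrix system, so the two representations carry the same information. The step requiring the most care is the identification of the $(2,2)$-entry after the unipotent gauge change with $T(qz)\rho(qz)^{-1}$, which is essentially bookkeeping around the definition of the transfer matrix at the shifted argument; once this is verified, the remaining passage to the scalar operator is routine.
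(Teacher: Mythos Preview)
Your proposal is correct and follows essentially the same approach as the paper: the paper's proof is precisely the computation preceding the theorem (the two successive $q$-gauge changes followed by elimination to the scalar equation), and the theorem statement is introduced with ``Summing up, we have.'' One small slip: your displayed identity $T(qz)=a(qz)+a(z)^{-1}\rho(qz)/\rho(z)$ should read $T(qz)=a(qz)\rho(z)+a(z)^{-1}\rho(qz)$, so that $T(qz)\rho(qz)^{-1}=a(qz)\rho(z)/\rho(qz)+a(z)^{-1}$ matches the $(2,2)$-entry you computed; as you note, this is just bookkeeping.
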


\subsection{Embedding of the tRS model into $q$-opers}\label{s:trssl2}

We now explain a connection between nondegenerate twisted $(\SL(2),q)$-opers
and the two particle trigonometric
Ruijsenaars-Schneider model.  More precisely, we show that the
integrals of motion in the tRS model arise from nondegenerate twisted
opers with two regular singularities of weight one and with $Q_{-}$ linear.

Consider $Z$-twisted opers with two regular singularities $z_\pm$,
both of weight one, so $\rho=(z-z_+)(z-z_-)$.  For generic $q$, the
degree of the quantum Wronskian equals $\deg(Q_+)+\deg(Q_-)$.  Here,
we will only look at $q$-opers for which $\deg(Q_\pm)=1$, say
$Q_{-}=z-p_{-}$ and $Q_{+}=c(z-p_{+})$.  Here, $c$ is a nonzero
constant for which the quantum Wronskian is monic; an easy
calculation shows that $c=q^{-1}(\ze^{-1}-\ze)^{-1}$. 

Setting the quantum Wronskian equal to $\rho$ gives us the equation
\begin{equation}
z^2-\frac{z}{q}\left[\frac{\zeta-q\zeta^{-1}}{\zeta-\zeta^{-1}}p_+ + \frac{q\zeta-\zeta^{-1}}{\zeta-\zeta^{-1}}p_-\right]
+\frac{p_+ p_-}{q}
   =(z-z_+)(z-z_-)\,.
\end{equation}  Comparing powers of
$z$ on both sides, we obtain
\begin{equation}
\begin{gathered}\label{eq:tRS2body}\frac{\zeta-q\zeta^{-1}}{\zeta-\zeta^{-1}}p_+
    + \frac{q\zeta-\zeta^{-1}}{\zeta-\zeta^{-1}}p_- = q(z_+ + z_-)\\
\frac{p_+ p_-}{q} = z_+ z_-\,.
\end{gathered}
\end{equation}

Upon introducing coordinates $\zeta_{+}, \zeta_-$ such that
$\zeta=\zeta_+/\zeta_-$ and viewing $\zeta_{\pm}, p_{\pm}$ as the
positions and momenta in the two particle tRS model, we see that
\eqref{eq:tRS2body} are just the trigonometric Ruijsenaars-Schneider
equations~\cite{Koroteev:2017aa}.  In fact, the set of $Z$-twisted
opers with weight one singularities at $z_{\pm}$ is just the
intersection of two Lagrangian subspaces of the two particle tRS phase
space: the subspace determined by \eqref{eq:tRS2body} and the subspace
with the $\zeta_{\pm}$ fixed constants satisfying
$\zeta=\zeta_+/\zeta_-$.  As we will see in Section~\ref{Sec:Kth},
this construction can be generalized to higher rank.

\section{$(\SL(N),q)$-opers}\label{Sec:SLNqOps}
\subsection{Definitions}\label{sec:slnqdef}

We now discuss the generalization of $(\SL(2),q)$-opers to $\SL(N)$.

\begin{Def} A \emph{$(\GL(N),q)$-oper} on $\mathbb{P}^1$ is a triple
  $(E,A,\cL_\bullet)$, where $(E,A)$ is a $(\GL(N),q)$-connection and
  $\cL_\bullet$ is a complete flag of subbundles such that $A$ maps $\cL_i$
  into $\cL_{i+1}^q$ and the induced maps
  $\bar{A}_i:\cL_i/\cL_{i-1}\to \cL^q_{i+1}/\cL^q_i$ are
  isomorphisms for $i=1,\dots,N-1$. The triple is called an
  \emph{$\SL(N)$-oper} if  $(E,A)$ is an $(\SL(N),q)$-connection.
\end{Def}
To make this definition more explicit, consider the determinants
\begin{equation}\label{altqW} \Big(s(q^{i-1}z)\wedge
  A(q^{i-2}z)s(q^{i-2}z)\wedge\dots\wedge
  \Big(\prod_{j=0}^{i-2}(A(q^{i-2-j}z)\Big)s(z)\Big)\bigg|_{\Lambda^i
\cL_i^{q^{i-1}}}
\end{equation}
for $i=1,\dots, N$, where $s$ is a local section of $\cL_1$.  Then
$(E,A,\cL_\bullet)$ is a $q$-oper if and only if at every point, there
exists local sections for which each  $\cW_i(s)(z)$ is nonzero.  It
will be more convenient to consider determinants with the same zeros
as those in \eqref{altqW}, but with no $q$-shifts:
\begin{equation}\label{qW}\cW_i(s)(z)=\Big(s(z)\wedge
  A(z)^{-1}s(q z)\wedge\dots\wedge
  \Big(\prod_{j=0}^{i-2}(A(q^{j}z)^{-1}\Big)s(q^{i-1}z)\Big)\bigg|_{\Lambda^i
\cL_i}.
\end{equation}

As in the classical setting, we need to relax these conditions to
allow for regular singularities.  Fix a collection of $L$ points
$z_1,\dots,z_L\ne 0,\infty$ such that the $q^\Z$-lattices they
generate are pairwise disjoint.  We associate a dominant integral
weight $\lambda_m=\sum l_m^i\om_i$ to each $z_m$.  Set
$\ell_m^{i}=\sum_{j=1}^{i} l_m^{j}$.

\begin{Def} An \emph{$(\SL(N),q)$-oper with regular singularities} at
  the points $z_1, \dots, z_L\ne 0,\infty$ with weights $\lambda_1,
  \dots \lambda_L$ is a meromorphic $(\SL(N),q)$-oper such that each
  $\bar{A}_i$ is an isomorphism except at the points
  $q^{-\ell^{i-1}_m}z_m, q^{-\ell^{i-1}_m+1}z_m, \dots,
  q^{-\ell^{i}_m+1}z_m$ for each $m$, where it has simple zeros.
\end{Def}

\begin{figure}[!h]
\includegraphics[scale=0.35]{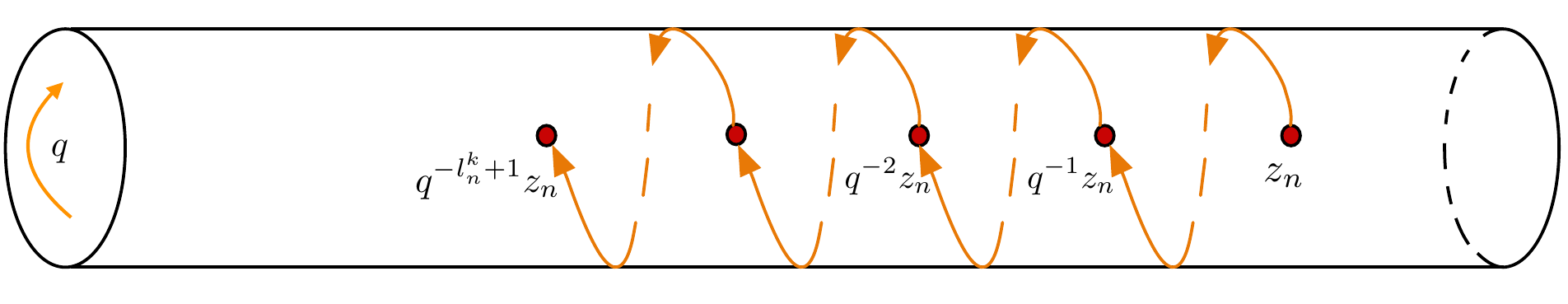}
\caption{Weight of the singularity $z_n$ as $q$-monodromy around the
  cylinder ($\mathbb{P}^1$ with $0$ and $\infty$ removed).}
\label{fig:qmon}
\end{figure}
 
In order to express the locations of the roots of the $\cW_i(s)$'s,
it is convenient to introduce the polynomials
\begin{equation}\label{LambdaPoly}
\Lambda_i =
\prod^{L}_{m=1}\prod^{\ell_m^{i}-1}_{j=\ell_m^{i-1}}(z-q^{-j}z_m)\,
\end{equation}
with zeros precisely where $\bar{A}_i$ is not an isomorphism.  We also
set \begin{equation}\label{Pidef} P_i=\Lambda_1 \Lambda_2\cdots\Lambda_i = \prod^{L}_{m=1}\prod^{\ell_m^{i}-1}_{j=0}(z-q^{-j}z_m).
\end{equation} 
We introduce the notation $f^{(j)}(z)=D_q^j(f)(z)=f(q^j z)$. The zeros
of $\cW_k(s)$ coincide with those of the polynomial
\begin{equation} \label{eq:Wkdef}
\begin{aligned}
W_k(s)&=\Lambda_1\left(\Lambda^{(1)}_1 \Lambda^{(1)}_2 \right)\cdots
 \left(\Lambda^{(k-2)}_1\cdots\Lambda^{(k-2)}_{k-1}\right)\\&= P_1 \cdot P^{(1)}_2\cdot P^{(2)}_3\cdots P^{(k-2)}_{k-1}\,.
\end{aligned}
\end{equation}

We now define \emph{twisted $q$-opers}.  Let
$Z=\diag(\zeta_1,\dots,\zeta_N)\in\SL(N,\C)$ be a diagonal matrix with
distinct eigenvalues.
 
\begin{Def} An $(\SL(N),q)$-oper $(E,A,\cL_\bullet)$ with regular singularities
  is called a \emph{$Z$-twisted $q$-oper} if $A$ is gauge-equivalent
  to $Z^{-1}$.
\end{Def}

 As in the $\SL(2)$ case, this is a deformed version of opers with
 irregular singularities that arise in the inhomogeneous
version of the Gaudin model introduced in
\cite{FFTL:2010,Rybnikov:2010}.

\subsection{Miura $q$-opers and quantum Wronskians}

Given a $q$-oper with regular singularities $(E,A,\cL_\bullet)$, we
can define the associated \emph{Miura $q$-opers} as quadruples
$(E,A,\cL_\bullet,\hcL_\bullet)$ where $\hcL_\bullet$ is a complete
flag preserved by the $q$-connection, i.e., $A$ maps $\hcL_i$ into
$\hcL^q_i$ for all $i$.  Again, we will primarily be interested in
\emph{nondegenerate} Miura $q$-opers.  This means that the flags
$(\cL_\bullet(z),\hcL_\bullet(z))$ are in general position at all but
a finite number of points $\{w_j\}$; moreover, at each $w_j$, the
relative position is $w_0 \sigma_k$ for some simple reflection
$\sigma_k$.  Finally, we assume that $q^\Z w_i\cap q^\Z
w_j=\varnothing$ if $i\ne j$ and also that the $q^\Z$ lattices
generated by the $z_m$'s and $w_j$'s do not intersect.  (We remark
that these last conditions are stronger than necessary; for example,
one may instead specify that $w_j\ne q^i z_m$ for all $j$ and $m$ and
for $|i|\le n$, where $n$ is a positive integer that may be computed
explicitly from the weights.)

We now specialize to the case where $(E,A,\cL_\bullet)$ is a
$Z$-twisted $q$-oper.  Here, there are only a finite number of possible
associated Miura $q$-opers.  Indeed, if we consider the gauge where
the matrix of the $q$-connection is the regular semisimple diagonal
matrix $Z^{-1}$, we see that the only possibilities for $\hcL_\bullet$
are the $N!$ flags given by the permutations of the standard ordered
basis $e_1,\dots,e_N$.  (This is analogous to the classical situation.
The Miura opers lying above a given oper with regular singularities
and trivial monodromy are parametrized by the flag manifold.  However,
there are only $N!$ Miura opers associated to an oper with regular
singularities on $\P^1\setminus\infty$ whose underlying connection if
$d+h\, dz$, where $h\in\gl(N,\C)$ is regular semisimple.) It
suffices to consider Miura $q$-opers for the standard flag; indeed, if
not, we can gauge change to one where $\hcL_\bullet$ is the standard
flag, but where $Z$ is replaced by a Weyl group conjugate.

Let $s(z)=(s_1(z),\dots,s_N(z))$ be a section generating $\cL_1$,
where the $s_a$'s are polynomials.  We
now show that the nondegeneracy of the Miura $q$-oper may be expressed
in terms of quantum Wronskians.  Consider the zeros of the determinants \begin{equation}\label{qD}
\mathcal{D}_k(s)=e_1\wedge\dots\wedge{e_{N-k}}\wedge
s(z)\wedge Z s(qz)\wedge\dots\wedge Z^{k-1}s(q^{k-1}z)\,
\end{equation}
for $k=1,\dots,N$. The arguments of Section~\ref{s:slnclassical} show
that for our $q$-oper to be nondegenerate, we need the zeros of
$\mathcal{D}_k(s)$ in $\cup_m q^\Z z_m$ to coincide with those of
$\cW_k(s)$.  Moreover, we want the other roots of $\mathcal{D}_k(s)$
to generate disjoint $q^\Z$ lattices.  To be more explicit, for
$k=1,\dots,N$, we have nonzero constants $\alpha_k$ and polynomials
\begin{equation} \mathcal{V}_k(z) = \prod_{a=1}^{r_k}(z-v_{k,a})\,,
\label{eq:BaxterRho}
\end{equation}
for which 
\begin{equation}
\det\begin{pmatrix} \,     1 & \dots & 0 & s_{1}(z) & \zeta_{1} s_{1}(qz) & \cdots & \zeta_{1}^{k-1} s_{1}(q^{k-1}z) \\ 
 \vdots & \ddots & \vdots& \vdots & \vdots & \ddots & \vdots \\  
0 & \dots & 1&s_{N-k}(z) & \zeta_{N-k} s_{N-k}(qz) &  \dots & \zeta^{k-1}_{N-k} s_{N-k}(q^{k-1}z)  \\  
0 & \dots & 0&s_{N-k+1}(z) & \zeta_{N-k+1} s_{N-k+1}(qz) &  \dots & \zeta^{N-k-1}_{N-k+1} s_{N-k+1}(q^{k-1}z)  \\
\vdots & \ddots & \vdots&\vdots & \vdots & \ddots & \vdots \\
0 & \dots & 0&s_{N}(z) & \zeta_{N} s_{N}(qz) & \cdots & \zeta_{N}^{k-1} s_{N}(q^{k-1}z)  \, \end{pmatrix} =\alpha_{k} W_{k}
\cV_{k} \,; 
\label{eq:MiuraQOperCond}
\end{equation}
moreover $q^\Z v_{k,a}$ is disjoint from every other $q^\Z v_{i,b}$
and each $q^\Z z_m$.  Since $\cD_N(s)=\cW_N(s)$, we have
$\cV_N=1$.  We also set $\cV_0=1$; this is consistent with the fact
that \eqref{qD} also makes sense for $k=0$, giving
$\cD_0=e_1\wedge\dots\wedge e_N$.

We can also rewrite \eqref{eq:MiuraQOperCond} as
\begin{equation}
  \underset{i,j}{\det} \left[\zeta_{N-k+i}^{j-1} s_{N-k+i}^{(j-1)}\right] = \alpha_{k} W_{k} \mathcal{V}_{k}\,,
\label{eq:MiuraDetForm}
\end{equation}
where $i,j = 1,\dots,k$.

We remark that the nonzero constants $\a_1,\dots,\a_N$ are
normalization constants for the section $s$ and may be chosen
arbitrarily by first multiplying $s$ by a nonzero constant and then
applying constant gauge changes by diagonal matrices in $\SL(N)$.

\subsection{$(\SL(N),q)$-Opers and the XXZ Bethe ansatz}
We are now ready to state and prove our main theorem which relates twisted
$(\SL(N),q)$-opers to solutions of the XXZ Bethe ansatz equations for $\sl_N$. 

The Bethe equations for the general $\sl_N$ XXZ spin chain depend on
an anisotropy parameter $q\in\C^*$ and twist parameters
$\kappa_1,\dots,\kappa_N$ satisfying $\prod \kappa_i=1$.  The
equations can be written in the following form
\begin{equation}\label{eq:BetheExplicit}
 \frac{\kappa_{k+1}}{\kappa_{k}}\prod_{s=1}^L\frac{q^{\ell_s^{k}+\frac{k}{2}-\frac{3}{2}}\,u_{k,a}-z_s}{q^{\ell_s^{k-1}+\frac{k}{2}-\frac{3}{2}}\,u_{k,a}-z_s}\cdot\prod_{c=1}^{
 r_{k-1}}\frac{q^{\frac{1}{2}}u_{k,a}-u_{k-1,c}}{q^{-\frac{1}{2}}
 u_{k,a}-u_{k-1,c}}\cdot\prod_{b=1}^{r_k}\frac{q^{-1}
 u_{k,a}-u_{k,b}}{qu_{k,a}-
 u_{k,b}}\cdot\prod_{d=1}^{r_{k+1}}\frac{q^{\frac{1}{2}}u_{k,a}-u_{k+1,d}}{q^{-\frac{1}{2}} u_{k,a}-u_{k+1,d}}=1\,
\end{equation}
for $k=1,\dots N-1$, $a=1,\dots,r_k$.  (See, for example, \cite{Reshetikhin:1987}.)
The constants $\ell^i_m$ are determined by the dominant weights
$\lam_1,\dots,\lam_{L}$ as in Section~\ref{sec:slnqdef}.  We use the
convention that $r_0=r_N=0$, so one of the products in the first and
last equations is empty.

We remark that there exist many different
  normalizations of the XXZ Bethe equations in the literature depending
  on the scaling of the twist parameters.  The present normalization is
  designed to match the formulas obtained from $q$-opers.

We say that a solution of the Bethe equations is nondegenerate if
$z_s\notin q^{\frac{1-k}{2}}q^{\Z}u_{k,a}$ for all $k$ and $a$ and also
that $u_{k,a}\notin q^{\frac{k-k'}{2}}q^{\Z}u_{k',a'}$ unless $k=k'$
and $a=a'$.

\begin{theorem}\label{eq:MainTheorem} Suppose that
  $\kappa_1,\dots,\kappa_N$ generate disjoint $q^\Z$-lattices.  Then,
  there is a one-to-one correspondence between nondegenerate solutions
  of the $\sl_N$ XXZ Bethe ansatz equations \eqref{eq:BetheExplicit} with twist parameters
  $\kappa_i$ and nondegenerate $Z$-twisted $(\SL(N),q)$-opers with regular
  singularities at $z_1,\dots,z_L$ with dominant weight
  $\lam_1,\dots,\lam_L$ provided that
\begin{equation}
q^{\frac{1-k}{2}}u_{k,a} = v_{k,a}\quad\text{ and }\qquad \zeta_{k} = \kappa_{N+1-k}\,
\label{eq:rescalings}
\end{equation}
for $k=1,\dots, N$.
Moreover, the $q$-oper equations \eqref{eq:MiuraQOperCond} become
identical to the Bethe equations if one normalizes the section $s$ via
 \begin{equation}\alpha_k = q^{\frac{k-1}{2}r_k}\det V(\kappa_k,\dots,\kappa_1).
 \end{equation}
\end{theorem}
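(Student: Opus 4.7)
The plan is to chain the four preceding lemmas together so that the bijection is realized by passing between three layers of data: the Baxter polynomials $(Q_k,\widetilde Q_k)$ associated with the Bethe equations; the polynomials $(\mathscr{D}_k,\widetilde{\mathscr{D}}_k)=(Q_k F_k,\widetilde Q_k F_k)$ with $F_k=W_k^{(\frac{1-k}{2})}$; and the components $(\mathfrak{q}_1,\dots,\mathfrak{q}_N)$ of a generating section of $\mathcal{L}_1$. Under the substitutions $v_{k,a}=q^{\frac{1-k}{2}}u_{k,a}$ and $\zeta_k=\kappa_{N+1-k}$, the Baxter polynomial $Q_k$ will equal $\mathcal{V}_k$, so the two types of nondegeneracy hypotheses coincide.

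Forward direction (Bethe $\Rightarrow$ oper). Starting from a nondegenerate solution of \eqref{eq:BetheExplicit}, form $Q_k=\prod_a(z-u_{k,a})$. Lemma \ref{eq:PropQQtilde} produces unique auxiliary polynomials $\widetilde Q_k$ satisfying \eqref{eq:QQrelations}; the hypothesis that $\kappa_1,\dots,\kappa_N$ generate disjoint $q^{\mathbb{Z}}$-lattices implies the nonresonance condition $\kappa_k\notin q^{\mathbb{N}_0}\kappa_{k+1}$ needed there. The second lemma then converts the QQ-relations into the relations \eqref{eq:QQVform} for $\mathscr{D}_k$ and $\widetilde{\mathscr{D}}_k$. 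Lemma \ref{eq:qWronskiansShift} now yields unique polynomials $\mathfrak{q}_1,\dots,\mathfrak{q}_N$ realizing the $\mathscr{D}_k$ and $\widetilde{\mathscr{D}}_k$ as shifted Vandermonde-type determinants. I would then take $s=(\mathfrak{q}_1,\dots,\mathfrak{q}_N)^t$ as the generating section of $\mathcal{L}_1$; the Miura determinants $\mathcal{D}_k(s)$ of \eqref{eq:MiuraQOperCond} are (up to an overall $q$-shift) exactly $\det M_{N-k+1,\dots,N}$, so they factor as $\alpha_k W_k Q_k$ with $Q_k$ playing the role of $\mathcal{V}_k$.

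Reverse direction (oper $\Rightarrow$ Bethe). Given a nondegenerate $Z$-twisted Miura $q$-oper, the section $s$ provides polynomials $\mathfrak{q}_a=s_a$. The Miura nondegeneracy relations \eqref{eq:MiuraQOperCond} identify $\det M_{N-k+1,\dots,N}$ with $\alpha_k W_k \mathcal{V}_k\det V_{N-k+1,\dots,N}$; applying the Desnanot-Jacobi identity \eqref{eq:JacobiNew} to $M_{N-k,\dots,N}$ (which is the construction already used in the proof of Lemma \ref{eq:qWronskiansShift}) produces relations of the form \eqref{eq:QQVform} for $\mathscr{D}_k=\alpha_k W_k\mathcal{V}_k$ together with the auxiliary determinants $\widetilde{\mathscr{D}}_k=\det M_{N-k,N-k+2,\dots,N}/\det V_{N-k,N-k+2,\dots,N}$. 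Dividing by $F_k$ using the functional equation \eqref{eq:FFuncEq} reduces this to the QQ-system \eqref{eq:QQrelations} with $Q_k=\mathcal{V}_k$ (rescaled via $v_{k,a}=q^{\frac{1-k}{2}}u_{k,a}$), and Lemma \ref{eq:PropQQtilde} gives the Bethe equations \eqref{eq:BetheExplicit}. The nondegeneracy of the $q$-oper (the disjointness of the $q^{\mathbb{Z}}$-lattices generated by the $v_{k,a}$ and $z_m$) matches the nondegeneracy of the Bethe solution.

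Normalization and the main difficulty. The formula for $\alpha_k$ comes from carefully comparing the determinant $\det M_{N-k+1,\dots,N}$ (whose columns involve the shifts $q^{\frac{1-k}{2}},\dots,q^{\frac{k-1}{2}}$) with the determinant $\mathcal{D}_k(s)$ in \eqref{eq:MiuraQOperCond} (whose columns involve the shifts $1,q,\dots,q^{k-1}$). Pulling out the global shift $q^{\frac{k-1}{2}}$ from each of the $r_k$ roots of $\mathcal{V}_k$ contributes the factor $q^{\frac{k-1}{2}r_k}$, while the Vandermonde denominator $\det V_{N-k+1,\dots,N}=\det V(\kappa_k,\dots,\kappa_1)$ appears directly from Lemma \ref{eq:qWronskiansShift}. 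The main obstacle is therefore not structural but combinatorial: keeping track of $q$-shifts, sign conventions and Vandermonde denominators throughout the translation so that the $\alpha_k$'s produced by the forward construction agree, after normalization of $s$ by a constant and a diagonal gauge, with the value $\alpha_k=q^{\frac{k-1}{2}r_k}\det V(\kappa_k,\dots,\kappa_1)$ stated in the theorem.
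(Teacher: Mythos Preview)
Your proposal is correct and follows essentially the same route as the paper.  The only difference is presentational: the paper does not run the argument in two separate directions.  Instead, having established via the four lemmas that a nondegenerate Bethe solution is uniquely encoded by polynomials $\mathfrak{q}_1,\dots,\mathfrak{q}_N$ satisfying \eqref{eq:BetheDetForm}, it simply $q$-shifts that equation by $\tfrac{k-1}{2}$, compares it side by side with the Miura relation \eqref{eq:MiuraDetForm}, and observes that the two become literally identical under $\zeta_k=\kappa_{N+1-k}$, $v_{k,a}=q^{\frac{1-k}{2}}u_{k,a}$, and $\alpha_k=q^{\frac{k-1}{2}r_k}\det V(\kappa_k,\dots,\kappa_1)$.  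Since the $q$-oper is determined by $s$ and the Bethe solution by $\mathfrak{q}$, the identification $s_k=\mathfrak{q}_k$ gives the bijection in one stroke.  Your explicit forward and reverse constructions (including the direct appeal to Desnanot--Jacobi in the reverse direction) unpack exactly this, and your computation of $\alpha_k$ is the same as the paper's: $Q_k^{(\frac{k-1}{2})}=q^{\frac{k-1}{2}r_k}\mathcal{V}_k$ and $\det[\kappa_{k+1-i}^{j-1}]=\det V(\kappa_k,\dots,\kappa_1)$.
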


For the computations to follow, it will be convenient to introduce the
Baxter polynomials\footnote{The terminology
  comes from the analogy between these polynomials and the polynomials
  determining the eigenvalues of the Baxter operators arising from
  transfer matrices~\cite{Reshetikhin:2010si}.}
\begin{equation}\label{eq:PiDef}
\Pi_k =p_k\prod_{s=1}^L \prod_{j=\ell_s^{k-1}}^{\ell_s^{k}-1}\left(z-q^{1-\frac{k}{2}-j} z_s\right)\,,\qquad
Q_k = \prod_{a=1}^{r_k}(z-u_{k,a})\,, \qquad k= 1,\dots N-1\,,
\end{equation}
where the normalization constants $p_k =q^{(\frac{k}{2}-1)\sum_{m=1}^L
  l_{k}}$ are chosen so that
$\Pi_k=\Lambda^{\tiny{(\frac{k}{2}}-1)}_{k}$.  The Bethe equations
\eqref{eq:BetheExplicit} can then be written as
\begin{equation}\label{eq:TQBaxter}
\left.\frac{\kappa_{k+1}}{\kappa_{k} } \frac{\Pi_k^{(\tiny{\frac{1}{2}})}Q_{k-1}^{(\tiny{\frac{1}{2}})} Q_{k}^{(-1)} Q_{k+1}^{(\tiny{\frac{1}{2}})}}{\Pi_k^{(-\tiny{\frac{1}{2}})}Q_{k-1}^{(-\tiny{\frac{1}{2}})} Q_{k}^{(1)} Q_{k+1}^{(-\tiny{\frac{1}{2}})}}\right|_{u_{k,a}} = -1\,,
\end{equation}
where we recall that $f^{(p)}(z)=f(q^p z)$.

We observe that the Baxter polynomials are remarkably similar to the
polynomials $\Lambda_k$ and $\cV_k$ (see \eqref{LambdaPoly} and
\eqref{eq:BaxterRho}) which we used to describe the zeros of the
quantum Wronskians arising from twisted $q$-opers.  Our main theorem
makes this connection precise.

In order to prove the theorem, we will need four lemmas.

\begin{lemma}\label{eq:PropQQtilde}
Suppose that $\kappa_{k}\notin q^{\mathbb{N}_0}\kappa_{k+1}$ for all $k$.
Then, the system of equations \eqref{eq:TQBaxter}
is equivalent to the existence of auxiliary polynomials  $\widetilde
Q_k(z)$ satisfying the following system of equations
\begin{equation}\label{eq:QQrelations}
\kappa_{k+1} Q^{(-\tiny{\frac{1}{2}})}_k \widetilde Q^{(\tiny{\frac{1}{2}})}_k - \kappa_{k} Q^{(\tiny{\frac{1}{2}})}_k \widetilde Q^{(-\tiny{\frac{1}{2}})}_k= (\kappa_{k+1}-\kappa_k)Q_{k-1}Q_{k+1}\Pi_k\,,
\end{equation}
for $ k= 1,\dots N-1$.  Moreover, these polynomials are unique.
\end{lemma}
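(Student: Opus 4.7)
The plan is to view \eqref{eq:QQrelations} as a non-homogeneous linear $q$-difference equation for the unknown polynomial $\widetilde{Q}_k$ (with $Q_k$ and the right-hand side $F_k:=(\kappa_{k+1}-\kappa_k)Q_{k-1}Q_{k+1}\Pi_k$ treated as known), and to establish the equivalence together with uniqueness by an evaluation argument at the Bethe roots combined with a dimension count. The critical observation is that evaluating at $z=q^{\pm 1/2}u_{k,a}$ annihilates one of the two left-hand terms because $Q_k(u_{k,a})=0$, so each such evaluation produces an explicit expression for $\widetilde{Q}_k(u_{k,a})$.

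For the forward direction, I would carry out these two evaluations. The substitution $z=q^{1/2}u_{k,a}$ gives $-\kappa_k Q_k^{(1)}(u_{k,a})\widetilde{Q}_k(u_{k,a})=F_k(q^{1/2}u_{k,a})$, while $z=q^{-1/2}u_{k,a}$ gives $\kappa_{k+1}Q_k^{(-1)}(u_{k,a})\widetilde{Q}_k(u_{k,a})=F_k(q^{-1/2}u_{k,a})$; equating the resulting two values of $\widetilde{Q}_k(u_{k,a})$ and rearranging produces precisely \eqref{eq:TQBaxter} at $u_{k,a}$. For uniqueness, the difference $P$ of two polynomial solutions satisfies the homogeneous relation, equivalently $R:=P/Q_k$ satisfies $R(qz)=(\kappa_k/\kappa_{k+1})R(z)$. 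The only rational solutions of this $q$-scaling equation are $R=cz^n$ with $q^n=\kappa_k/\kappa_{k+1}$, and requiring $P=cz^nQ_k$ to be a polynomial forces $n\in\mathbb{N}_0$, contradicting $\kappa_k\notin q^{\mathbb{N}_0}\kappa_{k+1}$ unless $P=0$.

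For the converse, my plan is a dimension count. Set $\widetilde r_k:=r_{k-1}+r_{k+1}+\sum_s l_s^k-r_k$ and consider the linear map $W_k\colon\mathbb{C}[z]_{\le\widetilde r_k}\to\mathbb{C}[z]_{\le\widetilde r_k+r_k}$ sending $P$ to the left-hand side of \eqref{eq:QQrelations}. By the uniqueness argument $W_k$ is injective, so $\dim\operatorname{Im}(W_k)=\widetilde r_k+1$. Every $F\in\operatorname{Im}(W_k)$ must satisfy the $r_k$ linear constraints $\kappa_{k+1}Q_k^{(-1)}(u_{k,a})F(q^{1/2}u_{k,a})+\kappa_k Q_k^{(1)}(u_{k,a})F(q^{-1/2}u_{k,a})=0$ derived in the forward step; under the nondegeneracy assumption the $2r_k$ evaluation points $q^{\pm 1/2}u_{k,a}$ are pairwise distinct, so these constraints are independent and cut out a subspace $\mathcal{V}\subset\mathbb{C}[z]_{\le\widetilde r_k+r_k}$ of dimension $\widetilde r_k+1$. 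Hence $W_k$ surjects onto $\mathcal{V}$, and the Bethe equations \eqref{eq:TQBaxter} are exactly the assertion that $F_k\in\mathcal{V}$, producing the desired $\widetilde{Q}_k$. The main obstacle will be degree bookkeeping: I must verify that the leading-coefficient map in $W_k$ is nonzero (so the image really is of dimension $\widetilde r_k+1$ inside $\mathbb{C}[z]_{\le\widetilde r_k+r_k}$), which again uses $\kappa_k\notin q^{\mathbb{N}_0}\kappa_{k+1}$, and that the disjointness of the $q^{\mathbb{Z}}$-orbits built into the definition of a nondegenerate Miura $q$-oper suffices to guarantee the independence of the $r_k$ evaluation functionals.
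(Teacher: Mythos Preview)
Your approach differs substantially from the paper's. The paper does not use a dimension count: it rewrites \eqref{eq:QQrelations} as a first-order inhomogeneous $q$-difference equation for the rational function $g_k=\widetilde Q_k/Q_k$, namely $\kappa_{k+1}g_k^{(1)}-\kappa_k g_k=f/(Q_kQ_k^{(1)})$ with $f=(\kappa_{k+1}-\kappa_k)Q_{k-1}^{(1/2)}Q_{k+1}^{(1/2)}\Pi_k^{(1/2)}$, and solves it by partial fractions. The residue matching at each $u_{k,a}$ is exactly the Bethe equation \eqref{eq:TQBaxter}, while the polynomial part is solved term by term using $\kappa_{k+1}q^i\neq\kappa_k$ for $i\ge 0$. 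This is constructive, handles all degree regimes uniformly, and yields existence and uniqueness in one stroke. Your evaluation-at-roots derivation of the forward implication is equivalent to the paper's residue matching, just organized differently; your uniqueness argument via the scaling equation $R(qz)=(\kappa_k/\kappa_{k+1})R(z)$ is a clean alternative to the paper's coefficient-by-coefficient check (it tacitly uses $Q_k(0)\neq 0$, which the paper also relies on elsewhere).

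Your converse via dimension count has a gap as written. The assertion that distinctness of the $2r_k$ points $q^{\pm 1/2}u_{k,a}$ forces independence of the $r_k$ linear constraints on $\C[z]_{\le\widetilde r_k+r_k}$ only follows once the $2r_k$ evaluation functionals themselves are independent on that space, and that requires $2r_k\le\widetilde r_k+r_k+1$, i.e.\ $\widetilde r_k\ge r_k-1$; nothing in the hypotheses guarantees this. Likewise, the leading-coefficient check you flag needs $\kappa_k\neq q^{\widetilde r_k-r_k}\kappa_{k+1}$, which the hypothesis $\kappa_k\notin q^{\mathbb{N}_0}\kappa_{k+1}$ covers only when $\widetilde r_k\ge r_k$. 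Both issues disappear if you enlarge the domain: run the same argument with $W_k\colon\C[z]_{\le D}\to\C[z]_{\le D+r_k}$ for $D=\max(\widetilde r_k,\,r_k-1)$. Then the $2r_k$ evaluation functionals are independent, hence so are the $r_k$ constraints; since $\deg F_k=\widetilde r_k+r_k\le D+r_k$ one still has $F_k\in\mathcal V$, and injectivity of $W_k$ (which you proved on all of $\C[z]$) forces $\mathrm{Im}(W_k)=\mathcal V$. The lemma places no bound on $\deg\widetilde Q_k$, so nothing is lost by this enlargement.
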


\begin{proof}  Set $g(z)=\widetilde{Q}_k(z)/Q_k(z)$ and
  $f(z)=(\kappa_{k+1}-\kappa_k)Q_{k-1}^{(\frac{1}{2})}Q_{k+1}^{(\frac{1}{2})}\Pi_k^{(\frac{1}{2})}$, so that \eqref{eq:QQrelations} may be rewritten as 
\begin{equation}\label{e:gf}
\kappa_{k+1}g^{(1)}_k(z)-\kappa_kg_k(z)=\frac{f(z)}{Q_{k}(z)Q^{(1)}_k(z)}.
\end{equation}
We then have the partial fraction decompositions
\begin{equation} \begin{aligned}
&\frac{f(z)}{Q_{k}(z)Q^{(1)}_k(z)}=h(z)-\sum_a\frac{b_a}{z-u_{k,a}}+\sum_a\frac{c_a}{qz-u_{k,a}},\\
&g_k(z)=\tilde{g}_k(z)+\sum_a\frac{d_a}{z-u_{k,a}}
\end{aligned}
\end{equation}
where $h(z)$ and $\tilde{g}_k(z)$ are polynomials.  In order for the
residues at each $u_{k,a}$ to match on the two sides of \eqref{e:gf},
one needs
\begin{equation}d_a=\frac{b_a}{\kappa_{k}}=\frac{c_a}{\kappa_{k+1}}.
\end{equation}
The second equality is merely the Bethe equations \eqref{eq:TQBaxter} in
the alternate form
\begin{equation}
 \Res_{u_{k,a}}\left[\frac{f(z)}{\kappa_{k} Q_k(z)Q^{(1)}_k(z)}\right]+ \Res_{u_{k,a}}\left[\frac{f^{(-1)}(z)}{\kappa_{k+1} Q^{(-1)}_k(z)Q_k(z)}\right]=0
\end{equation}
or
\begin{equation}
\left(\left.\frac{Q^{(\tiny{\frac{1}{2}})}_{k-1}Q^{(\tiny{\frac{1}{2}})}_{k+1}\Pi_k^{(\tiny{\frac{1}{2}})}}{\kappa_{k}
    Q^{(1)}_k}+\frac{Q^{(-\tiny{\frac{1}{2}})}_{k-1}Q^{(-\tiny{\frac{1}{2}})}_{k+1}\Pi_k^{(-\tiny{\frac{1}{2}})}}{\kappa_{k+1}Q^{(-1)}_k}\right)\right|_{u_{k,a}}=0.
\end{equation} 
Next, to solve for the polynomial $\tilde{g}_k(z)$, set
$\tilde{g}_k(z)=\sum r_iz^i$ and $h(z)=\sum s_i z^i$.  We then obtain
the equations $r_i(\kappa_{k+1}q^i -\kappa_k )=s_i$.  Our assumptions
on the $\kappa_j$'s imply that these equations are always solvable.
Thus, there exist polynomials $\widetilde{Q}_k(z)$ satisfying
\eqref{eq:QQrelations} if and only if the Bethe equations hold.  The
uniqueness statement holds since the solutions for the residues $d_a$
and the coefficients of the polynomial $\tilde{g}_k(z)$ are unique.

\end{proof}

\begin{lemma}\label{eq:PropQQtilde}
The system of equations \eqref{eq:QQrelations} is equivalent to the set of equations
\begin{equation}
\kappa_{k+1} \mathscr{D}^{(-\tiny{\half})}_k \widetilde{\mathscr{D}}^{(\tiny{\half})}_k - \kappa_{k} \mathscr{D}^{(\tiny{\half})}_k \widetilde{\mathscr{D}}^{(-\tiny{\half})}_k= (\kappa_{k+1}-\kappa_k)\mathscr{D}_{k-1}\mathscr{D}_{k+1}\,,
\label{eq:QQVform}
\end{equation}
for the polynomials 
\begin{equation}\label{eq:Dkdef}
\mathscr{D}_k={Q_k}{F_k}\,,\qquad \widetilde{\mathscr{D}}_k={\widetilde Q_k}{F_k}\,,
\end{equation}
where $F_k=W^{(\tiny{\frac{1-k}{2}})}_{k}$.
\end{lemma}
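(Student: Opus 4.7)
The plan is to proceed by direct substitution of the relations $\mathscr{D}_k = Q_k F_k$ and $\widetilde{\mathscr{D}}_k = \widetilde{Q}_k F_k$ into the proposed equations \eqref{eq:QQVform}, and then to show that after cancelling common factors, the resulting equations are equivalent to \eqref{eq:QQrelations} term-by-term. Since the same polynomial factor $F_k$ multiplies both $Q_k$ and $\widetilde Q_k$, the left-hand side of \eqref{eq:QQVform} becomes
\begin{equation*}
F_k^{(-\tiny{\half})} F_k^{(\tiny{\half})}\Bigl[\kappa_{k+1} Q_k^{(-\tiny{\half})} \widetilde{Q}_k^{(\tiny{\half})} - \kappa_k Q_k^{(\tiny{\half})} \widetilde{Q}_k^{(-\tiny{\half})}\Bigr],
\end{equation*}
while the right-hand side becomes $(\kappa_{k+1}-\kappa_k)Q_{k-1}Q_{k+1}F_{k-1}F_{k+1}$. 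Thus the equivalence with \eqref{eq:QQrelations} reduces to the single polynomial identity
\begin{equation*}
F_k^{(-\tiny{\half})}\, F_k^{(\tiny{\half})}\, \Pi_k \;=\; F_{k-1}\, F_{k+1},
\end{equation*}
which does not involve the $Q_k$ or $\widetilde Q_k$ at all.

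The heart of the proof is to verify this identity using the explicit definition $F_k = W_k^{(\tiny{\frac{1-k}{2}})}$ together with \eqref{eq:Wkdef}. My plan is first to rewrite $W_k = \prod_{i=1}^{k-1} P_i^{(i-1)} = \prod_{j=1}^{k-1}\prod_{i=j}^{k-1}\Lambda_j^{(i-1)}$ by swapping the product order, so that $W_k$ becomes a double product over the $\Lambda_j$'s with a prescribed collection of $q$-shifts depending only on $j$ and $k$. After the overall shift by $(1-k)/2$ that defines $F_k$, the two factors $F_k^{(-1/2)}$ and $F_k^{(1/2)}$ on the left combine into a product of $\Lambda_j^{(\cdot)}$'s with shifts symmetric about a central value, and $F_{k-1}F_{k+1}$ on the right produces essentially the same product, except that the contribution of $\Lambda_k$ is missing on the left. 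Comparing the two sides, the only discrepancy is precisely $\Lambda_k^{(k/2-1)} = \Pi_k$, by the observation in \eqref{eq:PiDef}; this completes the identity.

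The step that requires the most care is the book-keeping of $q$-shifts in the double product expansion: one must match, for each $j\in\{1,\dots,k\}$, the range of exponents $l$ appearing in $F_k^{(\pm 1/2)}$ versus $F_{k-1}$ or $F_{k+1}$, and verify that the telescoping cancellation leaves exactly one unmatched factor of $\Lambda_k^{(k/2-1)}$. I expect this to be the main (essentially combinatorial) obstacle; once the shifts are tabulated correctly, the identity and hence the lemma follow immediately. Finally, both directions of the equivalence stated in the lemma hold simultaneously because the substitution $\mathscr{D}_k \leftrightarrow Q_k F_k$, $\widetilde{\mathscr{D}}_k \leftrightarrow \widetilde Q_k F_k$ is a bijection between the relevant polynomial data, as $F_k$ is a fixed nonzero polynomial independent of the Baxter data.
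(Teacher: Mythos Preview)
Your proposal is correct and follows essentially the same approach as the paper: both reduce the equivalence to the functional identity $F_{k-1}F_{k+1}=F_k^{(-1/2)}F_k^{(1/2)}\Pi_k$. The paper verifies this identity more directly by using the recursion $W_k=P_{k-1}^{(k-2)}W_{k-1}$ (immediate from \eqref{eq:Wkdef}) together with $P_k=\Lambda_k P_{k-1}$, which gives a two-line telescoping computation in place of the double-product bookkeeping you outline.
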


\begin{proof} The $F_k$'s are solutions to the functional equation
\begin{equation}
\frac{F_{k-1} \cdot F_{k+1}}{F^{(\tiny{\half})}_k \cdot F_k^{(-\tiny{\half})}}=\Pi_k\,.
\label{eq:FFuncEq}
\end{equation}
Indeed, since $W_k=P^{\tiny{(k-2)}}_{k-1}W_{k-1}$ and $P_k=\Lambda_k P_{k-1}$,
we have
\begin{equation}
\frac{F_{k-1} \cdot F_{k+1}}{F^{(\tiny{\half})}_k \cdot F_k^{(-\tiny{\half})}}=
\frac{W^{\tiny{(\frac{2-k}{2})}}_{k-1} \cdot W^{\tiny{(-\frac{k}{2}})}_{k+1}}
{W^{(\tiny{\frac{2-k}{2}})}_k \cdot
  W_k^{(-\tiny{\frac{k}{2}})}}=\frac{P^{\tiny{(\frac{k}{2}-1)}}_{k}}{P^{\tiny{(\frac{k}{2}-1)}}_{k-1}}=\Lambda^{\tiny{(\frac{k}{2}-1})}_{k}=\Pi_k.
\end{equation}
The equivalence of \eqref{eq:QQrelations} and \eqref{eq:QQVform}
follows easily from this fact.

\end{proof}

Let $V(\gamma_1,\dots,\gamma_k)$ denote the $k\times k$ Vandermonde
matrix $(\gamma_i^j)$.  We recall that this determinant is nonzero if
and only if the $\gamma_i$'s are distinct.

\begin{lemma}\label{Th:existencePoly}
Suppose that $\gamma_1,\dots,\gamma_{k-1}$ are nonzero complex numbers
such that $\gamma_j\notin
  q^{\mathbb{N}_0}\gamma_{k}$ for $j<k$.  Let $f_1,\dots,f_{k-1}$ be
  polynomials that do not vanish at $0$,
  and let $g$ be an arbitrary polynomial.
   Then there exists a unique polynomial $f_k$ satisfying 
\begin{equation}\label{Fmatrix}
g=\det\begin{pmatrix} f_1 & \gamma_{1} f_1^{(1)} & \cdots &
  \gamma_{1}^{k-1} f_1^{(k-1)} \\ \vdots & \vdots & \ddots & \vdots \\
  f_k  & \gamma_{k} f_k^{(1)} & \cdots & \gamma_k^{k-1} f_k^{(k-1)} \,
\end{pmatrix}.  
\end{equation}
Moreover, if $g(0)\ne 0$, then $f_k(0)\ne 0$.
 \end{lemma}
 \begin{proof} Set $f_j(z)=\sum a_{ji} z^i$ and $g(z)=\sum b_i z^i$,
   and let $F$ denote the matrix in \eqref{Fmatrix}.  We show that we
   can solve for the $a_{ki}$'s recursively.  Expanding by minors
   along the bottom row, we get $g=\sum_{j=1}^k (-1)^{k+j}\det
   F_{k,j}f_k^{(j-1)}$.  First, we equate the constant terms.  This
   gives
   $$b_0=a_{k0}\left(\prod_{j=1}^{k-1}a_{j0}\right)\sum_{j=1}^k  (-1)^{k+j}\gamma_k^{j-1}\det
   V(\gamma_1,\dots,\gamma_k)_{k,j}=a_{k0}\left(\prod_{j=1}^{k-1}a_{j0}\right)\det
   V(\gamma_1,\dots,\gamma_k).$$ Since the $\gamma_j$'s are distinct,
   the Vandermonde determinant is nonzero.  Moreover, $a_{j0}\ne 0$
   for $j=1,\dots, k-1$.  Thus, we can solve uniquely for $a_{k0}$.
   In particular, if $b_0=0$, then $a_{k0}=0$.

   For the inductive step, assume that we have found unique $a_{kr}$
   for $r<s$ such that the polynomial equation \eqref{Fmatrix} has
   equal coefficients up through degree $s-1$.  We now look at the
   coefficient of $z^s$.  The only way that $a_{ks}$ appears in this
   coefficient is through the constant terms of the minors $F_{k,j}$.
   To be more explicit, equating the coefficient of $z^s$ in
   \eqref{Fmatrix} expresses $c a_{ks}$ as a polynomial in known
   quantities, where
\begin{align*} c&=\left(\prod_{j=1}^{k-1}a_{j0}\right)\sum_{j=1}^k  (-1)^{k+j}(q^{s}\gamma_k)^{j-1}\det
   V(\gamma_1,\dots,\gamma_{k-1},q^s\gamma_k)_{k,j}\\&=\left(\prod_{j=1}^{k-1}a_{j0}\right)\det
   V(\gamma_1,\dots,\gamma_{k-1},q^s\gamma_k).
 \end{align*}
Again, our condition on the $\gamma_j$'s implies that the Vandermonde
determinant is nonzero, so there is a unique solution for $a_{ks}$. 
 \end{proof}
 
In the following lemma, we consider matrices 
\begin{equation}
  M_{i_1,\dots, i_j}=\begin{pmatrix} \,  \mathfrak{q}_{i_1}^{(\small{\frac{1-j}{2})}} & \kappa_{N+1-i_1} \mathfrak{q}_{i_1}^{(\small{\frac{3-j}{2})}} & \cdots & \kappa_{N+1-i_1}^{j-1} \mathfrak{q}_{i_1}^{{(\small{\frac{j-1}{2})}}} \\ \vdots & \vdots & \ddots & \vdots \\  \mathfrak{q}_{i_j}^{(\small{\frac{1-j}{2})}}  & \kappa_{N+1-i_j} \mathfrak{q}_{i_j}^{(\small{\frac{3-j}{2})}} & \cdots & \kappa_{N+1-i_j}^{j-1} \mathfrak{q}_{i_j}^{(\small{\frac{j-1}{2})}} \, \end{pmatrix}\\,
\label{eq:MiuraQOperCond1q}
\end{equation}
where $\mfq_1,\dots,\mfq_N$ are polynomials.  We also set $V_{i_1,\dots,i_j}=V(\kappa_{N+1-i_1},\dots,\kappa_{N+1-i_j})$.

\begin{lemma}\label{eq:qWronskiansShift}
Assume that the lattices $q^\Z \kappa_k$ are disjoint for distinct
$k$.  Given polynomials $\mathscr{D}_k, \widetilde{\mathscr{D}}_k$ for $k=1,\dots,N-1$ satisfying
\eqref{eq:QQVform}, there exist unique polynomials
$\mfq_1,\dots,\mfq_N$ such that 
\begin{equation}
\mathscr{D}_k = \frac{\det\, M_{N-k+1,\dots, N}}{\det\,
  V_{N-k+1,\dots, N}}\,\qquad\text{ and } \qquad
\widetilde{\mathscr{D}}_k = \frac{\det\, M_{N-k,N-k+2,\dots, N}}{\det\, V_{N-k,N-k+2,\dots, N}}\,.
\label{eq:MM0Inda}
\end{equation}
\end{lemma}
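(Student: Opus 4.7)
The plan is to construct $\mfq_1,\ldots,\mfq_N$ recursively from $\mfq_N$ down to $\mfq_1$, invoking Lemma \ref{Th:existencePoly} at each stage and using the QQ relations \eqref{eq:QQVform} for consistency. For the base case, set $\mfq_N := \sD_1$ and $\mfq_{N-1} := \widetilde{\sD}_1$; both formulas in \eqref{eq:MM0Inda} at $k=1$ hold tautologically, since $M_N$ and $M_{N-1}$ are $1\times 1$ matrices whose Vandermonde denominators equal $1$.

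For the inductive step at $k \geq 2$, suppose $\mfq_N,\ldots,\mfq_{N-k+1}$ have been built so that \eqref{eq:MM0Inda} holds for all $j<k$. To produce $\mfq_{N-k}$, rewrite the equation $\widetilde{\sD}_k \cdot \det V_{N-k,N-k+2,\ldots,N} = \det M_{N-k,N-k+2,\ldots,N}$ by moving the row containing $\mfq_{N-k}$ to the bottom and absorbing the uniform $q$-shift $z \mapsto q^{(1-k)/2}z$; it then takes precisely the form \eqref{Fmatrix} of Lemma \ref{Th:existencePoly} with $\gamma_r = \kappa_{N+1-i_r}$, so $\gamma_k = \kappa_{k+1}$ and $\{\gamma_1,\ldots,\gamma_{k-1}\} = \{\kappa_1,\ldots,\kappa_{k-1}\}$. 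The disjointness of the $q^{\Z}\kappa_i$ lattices supplies the hypothesis $\gamma_j \notin q^{\mathbb{N}_0}\gamma_k$, and the ``moreover'' clause of Lemma \ref{Th:existencePoly} propagates nonvanishing at $z=0$ of the previously built $\mfq$'s through the induction, starting from the nondegeneracy of the input data. This yields a unique polynomial $\mfq_{N-k}$.

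It remains to verify the formula for $\sD_k$. Let $\sD_j^{\rm new}, \widetilde{\sD}_j^{\rm new}$ denote the right-hand sides of \eqref{eq:MM0Inda} computed from the $\mfq$'s constructed so far. The crucial input is a Plücker-type identity: for \emph{any} polynomials $\mfq_1,\ldots,\mfq_N$, the sequences $\sD_j^{\rm new}$ and $\widetilde{\sD}_j^{\rm new}$ automatically satisfy the QQ relations \eqref{eq:QQVform}. This is a $q$-analogue of the classical Desnanot--Jacobi identity on minors, or equivalently the Plücker relation for the discrete Wronskians of the $N$ formal solutions $n \mapsto \kappa_{N+1-i}^n\mfq_i^{(n)}$ of the trivial diagonal $q$-difference system; the $k=1$ case reduces to a direct $2\times 2$ computation, and the general case follows by standard row operations exploiting the Vandermonde structure. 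Granting this, the QQ relation at index $k-1$ for both the input and the ``new'' sequences yields
\begin{equation*}
(\kappa_k-\kappa_{k-1})\sD_{k-2}\sD_k = \kappa_k\sD_{k-1}^{(-1/2)}\widetilde{\sD}_{k-1}^{(1/2)} - \kappa_{k-1}\sD_{k-1}^{(1/2)}\widetilde{\sD}_{k-1}^{(-1/2)},
\end{equation*}
and the same identity with every symbol replaced by its ${}^{\rm new}$ version. By the induction hypothesis $\sD_j = \sD_j^{\rm new}$ and $\widetilde{\sD}_j = \widetilde{\sD}_j^{\rm new}$ for $j \leq k-1$, so the right-hand sides coincide; cancelling the nonzero polynomial $\sD_{k-2}$ (with the convention $\sD_0 := 1$) gives $\sD_k = \sD_k^{\rm new}$. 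Uniqueness of the $\mfq_i$'s is inherited directly from Lemma \ref{Th:existencePoly}. The main obstacle is a clean proof of the Plücker-type identity; once that is in hand, the remainder is bookkeeping, with the propagation of the nonvanishing-at-$0$ condition as a secondary technical point.
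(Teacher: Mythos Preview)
Your proposal is correct and follows essentially the same approach as the paper: define $\mfq_N=\sD_1$, $\mfq_{N-1}=\widetilde{\sD}_1$, inductively construct each new $\mfq_{N-k}$ via Lemma~\ref{Th:existencePoly} applied to the $\widetilde{\sD}_k$-equation, and then close the induction with the Desnanot--Jacobi identity (your ``Pl\"ucker-type identity'') together with the nonvanishing of the $\sD_j$'s at $0$. The paper carries out the Desnanot--Jacobi step explicitly on the matrix $M_{N-k,\dots,N}$ and establishes $\sD_k(0),\widetilde{\sD}_k(0)\neq 0$ up front from the form $\sD_k=Q_kW_k^{((1-k)/2)}$, so the two points you flag as obstacles are precisely where the paper adds the missing detail.
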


For future reference, we note that the first relations from \eqref{eq:MM0Inda} can be rewritten
as
\begin{equation}
 \det \left[\kappa_{k+1-i}^{j-1} \mathfrak{q}_{N-k+i}^{\left(j-\frac{k+1}{2}\right)}\right]_{1\le i,j\le k} = \det\left[\kappa_{k+1-i}^{j-1}\right]_{1\le i,j\le k} \mathscr{D}_k\,.
\label{eq:BetheDetForm}
\end{equation}

\begin{proof}

  We begin by observing that since $W_k$ and $Q_k$ do not vanish at
  $0$, $\sD_k(0)\ne 0$ for all $k$.  This implies that
  $\widetilde{\sD}_k(0)\ne 0$ for all $k$ as well; otherwise, by
  \eqref{eq:QQVform}, either $\sD_{k-1}$ or $\sD_{k+1}$ would vanish
  at $0$.

Now, set $\mfq_N=\sD_1$ and $\mfq_{N-1}=\widetilde{\sD}_1$.  It is obvious
that these are the unique polynomials satisfying \eqref{eq:MM0Inda}
for $k=1$ and that $\mfq_N(0), \mfq_{N-1}(0)\ne 0$.  Also,
\eqref{eq:QQVform} gives
\begin{equation}
\kappa_{2} \mathfrak{q}_{N}^{(-\tiny{\frac{1}{2}})} \mathfrak{q}_{N-1}^{(\tiny{\frac{1}{2}})} - \kappa_{1} \mathfrak{q}_N^{(\tiny{\frac{1}{2}})} \mathfrak{q}_{N-1}^{(-\tiny{\frac{1}{2}})} = (\kappa_{2}-\kappa_{1})\,\mathscr{D}_2\,
\end{equation}
so $\sD_2=M_{N-1,N}/V_{N-1,N}$.

Next, suppose that for $2\le k\le N-1$, we have shown that there exist
unique polynomials $\mfq_N,\dots,\mfq_{N-k+1}$ such the formulas for
$\sD_j$ (resp. $\widetilde{\sD}_j$) in \eqref{eq:MM0Inda} hold for
$1\le j\le k$ (resp. $1\le j\le k-1$).  Furthermore, assume that none
of these polynomials vanish at $0$.  We will show that there exists a
unique $\mfq_{N-k}$ such that the formulas for $\sD_{k+1}$ and
$\widetilde{\sD}_{k}$ hold and that $\mfq_{N-k}(0)\ne 0$.  This will
prove the lemma.

We use Lemma~\ref{Th:existencePoly} to define $\mfq_{N-k}$.  In the
notation of that lemma, set $f_j=\mfq_{N+1-j}^{(\frac{1-k}{2})}$ and
$\gamma_j=\kappa_j$ for $1\le j\le k-1$, and set
$g=(-1)^{\frac{k(k-1)}{2}}(\det
V_{N-k,N-k+2,\dots,N})\widetilde{\sD}_{k}$.  (The sign factor occurs
because we have written the rows in reverse order to apply the lemma.)
By hypothesis, $f_j(0)\ne 0$ for $1\le j\le k-1$, so there exists a
unique $f_{k}$ satisfying \eqref{Fmatrix}.  Moreover, $g(0)\ne 0$,
so $f_{k}\ne 0$.  It is now clear that
$\mfq_{N-k}=f_{k-1}^{(\frac{k-1}{2})}$ is the unique polynomial
satisfying the formula in \eqref{eq:MM0Inda} for
$\widetilde{\sD}_{k}$.  Of course, $\mfq_{N-k}\ne 0$.

To complete the inductive step, it remains to show that the formula
for $\sD_{k+1}$ is satisfied.  We make use of the Desnanot-Jacobi/Lewis
Carroll identity for determinants.  Given a square matrix $M$, let $M^i_j$ denote
the square submatrix with row $i$ and column $j$ removed;
similarly, let $M^{i,i'}_{j,j'}$ be the submatrix with rows $i$ and
$i'$ and columns $j$ and $j'$ removed.  We will apply this identity in
the form
\begin{equation}\label{eq:JacobiNew}
  \det M^{1}_1 \det M^{2}_{k+1}- \det M^{1}_{k+1} \det M^{2}_{1}= \det
  M^{1,2}_{1,k+1} \det M\,.
\end{equation}

Set $M=M_{N-k,\dots,N}$.  All of the matrices appearing in
\eqref{eq:JacobiNew} are obtained from matrices of the form
\eqref{eq:MiuraQOperCond1q} via $q$-shifts, multiplication of each row
by an appropriate $\kappa_i$, or both.  In particular,
$M^1_{k+1}=M_{N-k+1,\dots,N}^{(-\frac{1}{2})}$ and
$M^{2}_{k+1}=M_{N-k,N-k+2,\dots,N}^{(-\frac{1}{2})}$ while the
determinants of the other three are given by
\begin{equation}\begin{gathered} \det M^{1}_{1}=\kappa_{k}\bigg(\prod_{j=1}^{k-1}\kappa_j\bigg)
  \det M_{N-k+1,\dots,N}^{(\frac{1}{2})},\qquad \det M^{2}_{1}=\kappa_{k+1}\bigg(\prod_{j=1}^{k-1}\kappa_j\bigg)
  \det M_{N-k,N-k+2,\dots,N}^{(\frac{1}{2})},\\
 \det M^{1,2}_{1,k+1}=\bigg(\prod_{j=1}^{k-1}\kappa_j\bigg)
  \det M_{N-k+2,\dots,N}.
\end{gathered}
\end{equation}
Upon substituting into \eqref{eq:JacobiNew} and dividing by
$\prod_{j=1}^{k-1}\kappa_j$, we obtain

\begin{multline}\kappa_k \det M_{N-k+1,\dots,N}^{(\frac{1}{2})}\det
  M_{N-k,N-k+2,\dots,N}^{(-\frac{1}{2})}- \kappa_{k+1}\det M_{N-k+1,\dots,N}^{(-\frac{1}{2})}\det M_{N-k,N-k+2,\dots,N}^{(\frac{1}{2})}\\
 = \det M_{N-k+2,\dots,N} \det M_{N-k,\dots,N}\,.
\end{multline}
Finally, dividing both sides by $V_{N-k+1,\dots,N}V_{N-k,N-k+2,\dots,N}$ and
applying the inductive hypothesis gives \eqref{eq:Dkdef} multiplied by
$-1$.  This is obvious for the left-hand sides.  To see that the other sides match, one need only observe that
\begin{equation}\begin{aligned}V_{N-k+2,\dots,N}V_{N-k,\dots,N}&=(\kappa_k-\kappa_{k+1})\prod_{1\le i< j\le k-1}
  (\kappa_i-\kappa_j)^2\prod_{i=1}^{k-1}(\kappa_{i}-\kappa_{k})(\kappa_i-\kappa_{k+1})\\&=(\kappa_k-\kappa_{k+1})V_{N-k+1,\dots,N}V_{N-k,N-k+2,\dots,N}.
\end{aligned}
\end{equation}
\end{proof}


We are finally ready to prove Theorem~\ref{eq:MainTheorem}.


\begin{proof}[Proof of Theorem~\ref{eq:MainTheorem}]

We have shown that a solution to the Bethe equations is uniquely
determined by polynomials $\mfq_k$ satisfying
\eqref{eq:BetheDetForm}.  We will show that after matching the
parameters as in the statement and normalizing the section $s(z)$
generating the $q$-oper, the components $s_k$ also satisfy these
equations, so $s_k=\mfq_k$ for all $k$. Since the twisted $q$-oper is
uniquely determined by $s$, we obtain the desired correspondence.

After shifting \eqref{eq:BetheDetForm} by $\frac{k-1}{2}$ and
using the definition of $\sD_k$ from \eqref{eq:Dkdef}, we obtain
the equivalent form
\begin{equation}
 \underset{i,j}{\det} \left[\kappa_{k+1-i}^{j-1}
   \mathfrak{q}_{N-k+i}^{(j-1)}\right] = \underset{i,j}{\det}
 \left[\kappa_{k+1-i}^{j-1}\right] W_k Q_k^{(\frac{k-1}{2})}.
\end{equation}
On the other hand, rewriting the $q$-oper relations
\eqref{eq:MiuraDetForm} for convenience, we have
\begin{equation}
  \underset{i,j}{\det} \left[\zeta_{N-k+i}^{j-1} s_{N-k+i}^{(j-1)}\right] = \alpha_{k} W_{k} \mathcal{V}_{k}\,.
\end{equation}
If we set $q^{\frac{1-k}{2}}u_{k,a} = v_{k,a}$, then the roots of
$\cV$ and  $Q_k^{(\frac{k-1}{2})}$ coincide; moreover, the leading
terms of the polynomials on the right are the same if one takes
$\alpha_k = q^{\frac{k-1}{2}r_k}\det V(\kappa_k,\dots,\kappa_1)$.
Thus, if one sets $\zeta_{k} = \kappa_{N+1-k}$, the two equations are
identical.

It only remains to observe that the notions of nondegeneracy are
preserved by the transformation \eqref{eq:rescalings}.
\end{proof}

\section{Explicit equations for $(\SL(3),q)$-opers}\label{Sec:SL3qOps}
\subsection{A canonical form} 

In this section, we illustrate the general theory in the case of
$\SL(3)$.  In particular, we show that the underlying $q$-connection
can be expressed entirely in terms of the Baxter polynomials and the
twist parameters.

We start in the gauge where the connection is given by the diagonal
matrix $\diag(\zeta_1^{-1},\zeta_2^{-1},\zeta_3^{-1})$ and the section
generating the line bundle $\cL_1$ is $s=(s_1,s_2,s_3)$.  We now apply
a $q$-gauge change by a certain matrix $g(z)$ mapping $s$ to the
standard basis vector $e_3$:
\begin{equation}
g(z)=\begin{pmatrix}
 \beta(z) & -\alpha(z) & 0 \\
 0 & \beta(z)^{-1} & 0 \\
 0 & 0 & 1
\end{pmatrix}
\begin{pmatrix}
 s_2(z) & -s_1(z) & 0 \\
 0 & \frac{s_3(z)}{s_2(z)} & -1 \\
 0 & 0 & \frac{1}{s_3(z)}
\end{pmatrix}\,,
\end{equation}
where $\alpha(z)=\zeta^{-1}_1 s_1^{(-1)}s_2-\zeta^{-1}_2 s_1
s_2^{(-1)}$ and $\beta(z)=\frac{1}{s_2}(\zeta^{-1}_2 s_2^{(-1)} s_3
-\zeta^{-1}_3 s_2 s_3^{(-1)})$.
Applying the $q$-change formula \eqref{qgauge} leads to a matrix all of
whose entries are expressible in terms of minors of the matrix 
\begin{equation}
M_{1,2,3}^{(1)}=\begin{pmatrix} s_1 & \zeta_1 s_1^{(1)} & \zeta_1^2 s_1^{(2)}\\s_2 & \zeta_2 s_2^{(1)} & \zeta_2^2 s_2^{(2)}\\s_3 & \zeta_3 s_3^{(1)} & \zeta_3^2 s_3^{(2)}
\end{pmatrix}.
\end{equation}
By \eqref{eq:MiuraQOperCond}, the relations between the Baxter
polynomials and these determinants are given by  
\begin{equation}\begin{gathered}
\det M_3=\alpha_1\cV_1^{(-1)},\qquad M_{2,3}=\alpha_2 W_2^{(-1)} \cV_2^{(-1)}=\alpha_2 \Lambda_1^{(-1)}
\cV_2^{(-1)},\quad\text{ and}\\
\det M_{1,2,3}=\alpha_3
W_3^{(-1)}=\alpha_3\Lambda_1^{(-1)}\Lambda_1\Lambda_2.
\end{gathered}
\end{equation}
A further diagonal $q$-gauge change by
$\diag(\alpha_3^{-2/3}(\Lambda^{(-1)}_1)^{-1},\alpha_3^{1/3}\Lambda^{(-1)}_1,\alpha_3^{1/3})$
brings us to our desired form:
\begin{equation}
A(z)=\begin{pmatrix}
a_1(z) & \Lambda_2(z) & 0\\
0 & a_2(z) & \Lambda_1(z) \\
0 & 0 & a_3(z)
\end{pmatrix}\,,
\label{eq:SL3qconn}
\end{equation}
where
\begin{equation}\label{eq:a123}\begin{aligned}
a_1&=\zeta^{-1}_1\frac{\Lambda^{(-1)}_1}{\Lambda_1}\cdot\frac{\det
    M_{2,3}}{\det M_{2,3}^{(-1)}}=\zeta^{-1}_1\,\frac{\cV_2}{\cV_2^{(-1)}}
\,,\cr
a_2&=\zeta^{-1}_2\frac{\Lambda_1}{\Lambda^{(-1)}_1}\cdot
\frac{s_3^{(1)}}{s_3}\frac{\det M_{2,3}^{(-1)}}{\det M_{2,3}}
=\zeta^{-1}_2\,\frac{\cV_2^{(-1)}}{\cV_2}\cdot\frac{\cV_1^{(1)}}{\cV_1}\,,\cr
a_3&=\zeta^{-1}_3\frac{s_3}{s^{(1)}_3}
=\zeta^{-1}_3\,\frac{\cV_1}{\cV_1^{(1)}}
\,.
\end{aligned}
\end{equation}
Note that the singularities of the oper and the Bethe roots can be
determined from the zeros of
the superdiagonal and the diagonal respectively.

\subsection{Scalar difference equations and eigenvalues of transfer
  matrices}

The first-order system of difference equations $f(qz)=A(z)f(z)$
determined by \eqref{eq:SL3qconn} can be expressed as a third-order
scalar difference equation.  This is accomplished by the $q$-Miura
transformation: a $q$-gauge change by a lower triangular matrix which
reduces $A(z)$ to companion matrix form.  (This procedure appears as
part of the difference equation version of Drinfeld-Sokolov reduction
introduced in \cite{Frenkel_1998,Frenkel:ls,1998CMaPh.192..631S}.) 

In the XXZ model, the eigenvalues of the $\SL(3)$-transfer matrices
for the two fundamental weights are  \cite{Bazhanov_2002,Frenkel:2013uda}
\begin{equation}\begin{aligned}
    T_1 &=a^{(2)}_1\Lambda_1 \Lambda^{(1)}_2+a^{(1)}_2 \Lambda_1
    \Lambda^{(2)}_2+a_3 \Lambda_1^{(1)} \Lambda^{(2)}_2 \,,\cr T_2 &=
    a_1^{(1)} a^{(1)}_2 \Lambda_1 \Lambda_2 +a_1^{(1)} a_3
    \Lambda^{(1)}_1 \Lambda_2 +a_2 a_3 \Lambda_1^{(1)}
    \Lambda^{(1)}_2\,.
\end{aligned}
\end{equation}
Just as for $\SL(2)$, these eigenvalues appear in the  coefficients of the scalar difference
equation associated to our twisted $q$-oper.  Indeed, a simple
calculation shows that the system  

\begin{equation} \begin{aligned}
 f^{(1)}_1&=a_1 f_1 + \Lambda_2 f_2\\ 
 f^{(1)}_2&=a_2 f_2 + \Lambda_1 f_3\\ 
 f^{(1)}_3&=a_3 f_3
\end{aligned}
\end{equation}
is equivalent to
\begin{equation}
\Lambda_1 \Lambda_2 \Lambda_2^{(1)}\cdot f_1^{(3)}
-\Lambda_2\, T_1\cdot f_1^{(2)}
+\Lambda_2^{(2)}\, T_2\cdot f_1^{(1)}
-\Lambda_1^{(1)} \Lambda_2^{(1)} \Lambda_2^{(2)} \cdot f_1 = 0\,.
\end{equation}

\section{Scaling limits: From $q$-opers to opers}\label{Sec:Limits}

In this section, we consider classical limits of our results.  We will
take the limit from $q$-opers to opers in two steps.  The first
will give rise to a correspondence between the spectra of a twisted
version of the XXX spin chain and a twisted analogue of the discrete
opers of \cite{Mukhin_2005}.  By taking a further limit, we recover
the relationship between opers with an irregular singularity and the
inhomogeneous Gaudin model \cite{FFTL:2010,Rybnikov:2010}.

First, we introduce an exponential reparameterization of $q$, the
singularities, and the Bethe roots: $q=e^{R\upepsilon}$, $z_s = e^{R
  \upsigma_s}$, and $v_{k,a}= e^{R \upupsilon_{k,a}}$.  We also set
$\tilde \ell_s^k = \ell_s^k +\frac{k}{2}-\frac{3}{2}$.  We now take
the limit of the XXZ Bethe equations \eqref{eq:BetheExplicit} as $R$
goes to $0$.  This limit brings us to the XXX Bethe equations
\begin{equation}
  \frac{\kappa_{k+1}}{\kappa_{k}}\prod_{s=1}^L\frac{\upupsilon_{k,a}+\ell_s^{k}\upepsilon-\upsigma_s}{\upupsilon_{k,a}+\ell_s^{k-1}\upepsilon-\upsigma_s}\cdot\prod_{c=1}^{r_{k-1}}\frac{\upupsilon_{k,a}-\upupsilon_{k-1,c}+\frac{1}{2}\upepsilon}{\upupsilon_{k,a}-\upupsilon_{k-1,c}-\frac{1}{2}\upepsilon}\cdot\prod_{b=1}^{r_k}\frac{\upupsilon_{k,a}-\upupsilon_{k,b}-\upepsilon}{\upupsilon_{k,a}-\upupsilon_{k,b}+\upepsilon}\cdot\prod_{d=1}^{r_{k+1}}\frac{\upupsilon_{k,a}-\upupsilon_{k+1,d}+\frac{1}{2}\upepsilon}{\upupsilon_{k,a}-\upupsilon_{k+1,d}-\frac{1}{2}\upepsilon}=1\,.
\label{eq:XXXBetheExplicit}
\end{equation}
Geometrically, we identify $\C^*$ with an infinite cylinder of radius
$R^{-1}$ and view this cylinder as the base space of our twisted
$q$-oper.  We then send the radius to infinity, thereby arriving at a
twisted version of the discrete opers of Mukhin and
Varchenko~\cite{Mukhin_2005}.

The second limit takes us from the XXX spin chain to the Gaudin
model.  In order to do this, we set
$\kappa_i=e^{\upepsilon\upkappa_i}$, and let $\upepsilon$ go to $0$.
As expected, we obtain the Bethe equations for the inhomogeneous
Gaudin model, i.e., the higher rank analogues of \eqref{gbetheir}:
\begin{equation}\label{eq:rGaudinBetheExplicit}
\upkappa_{k+1}-\upkappa_{k}+\sum\limits_{s=1}^L\frac{l_s^k}{\upupsilon_{k,a}-\upsigma_s}
+\sum\limits_{c=1}^{r_{k-1}} \frac{1}{\upupsilon_{k,a}-\upupsilon_{k-1,c}}-\sum\limits_{b\neq a}^{r_k}\frac{2}{\upupsilon_{k,a}-\upupsilon_{k,b}}+\sum\limits_{d=1}^{r_{k+1}}\frac{1}{\upupsilon_{k,a}-\upupsilon_{k+1,d}}=0\,.
\end{equation}
 Note that the difference of the twists $\upkappa_i$ can be identified with the monodromy data of the connection $A(z)$ at infinity.

We have thus established the following hierarchy between integrable
spin chain models and oper structures. 
\begin{center}
\begin{figure}[!h]
\includegraphics[scale =0.4]{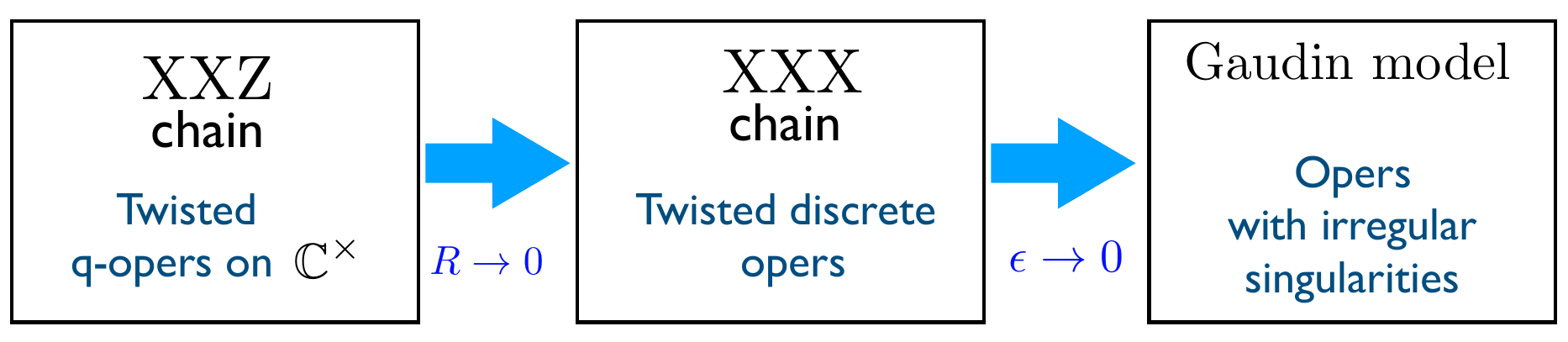}
\label{fig:hierarchy}
\end{figure}
\end{center}

We are currently working on extending this picture to include the XYZ
spin chain in this hierarchy.  We expect that the appropiate
difference opers to consider are ``twisted elliptic opers'' on elliptic
curves.

 \section{Quantum $K$-theory of Nakajima quiver varieties and $q$-opers}\label{Sec:Kth}
\subsection{The quantum $K$-theory ring for partial flag varieties}
As we discussed in the introduction, integrable models play an
important role in enumerative geometry.  For example, consider the XXZ
spin chain for $\sl_N$ where the dominant weights at the marked points
$z_m$ all correspond
to the defining representation, i.e., $\lambda_m=(1,0,0,\dots, 0)$.
Recall that cotangent bundles to partial flag varieties are
particular case of quiver varieties of type $A$
(see \figref{fig:ANquiver}).
\begin{center}
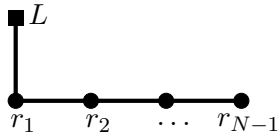
\begin{figure}[!h]
\begin{tikzpicture}
\draw [ultra thick] (0,0) -- (3,0);
\draw [ultra thick] (0,1) -- (0,0);
\draw [fill] (0,0) circle [radius=0.1];
\draw [fill] (1,0) circle [radius=0.1];
\draw [fill] (2,0) circle [radius=0.1];
\draw [fill] (3,0) circle [radius=0.1];
\node (1) at (0.1,-0.3) {$r_{1}$};
\node (2) at (1.1,-0.3) {$r_2$};
\node (3) at (2.1,-0.3) {$\ldots$};
\node (4) at (3.1,-0.3) {$r_{N-1}$};
\fill [ultra thick] (0-0.1,1) rectangle (0.1,1.2);
\node (5) at (0.3,1.15) {$L$};
\end{tikzpicture}
\caption{The cotangent bundle to the partial flag variety $T^*\mathbb{F}l_{\bm{\mu}}$}
\label{fig:ANquiver}
\end{figure}
\end{center}
It follows from work of Nakajima~\cite{Nakajima:2001qg} that the space
of localized equivariant $K$-theory of such a cotangent bundle can be
identified with an appropriate weight space in the corresponding XXZ
model; moreover, the span of all such weight spaces for partial flag
varieties of $\SL(N)$ is endowed with a natural action of the quantum
group $U_{q}(\sl_N)$.

In \cite{Koroteev:2017aa}, it was established that the Bethe algebra
for this XXZ model---the algebra generated by the $Q$-operators of the
XXZ spin chain---can be entirely described in terms of enumerative
geometry.  
The equivariant quantum $K$-theory\footnote{The quantum $K$-theory discussed here is the so-called
{\it PSZ quantum $K$-theory} \cite{Pushkar:2016qvw}, involving
quasimaps to Nakajima varieties, as opposed to the Givental-Lee
approach which relies on stable maps.   We refer to
\cite{Smirnov:2020lhm} for some recent progress in comparing the two approaches.}  of the cotangent bundle
to a partial flag variety has generators which are quantum versions of
tautological bundles.  It is shown in \cite{Koroteev:2017aa} that the
eigenvalues of these quantum tautological bundles are the symmetric
functions in the Bethe roots, so that the quantum $K$-theory may be
identified with the Bethe algebra.  Moreover, the twist parameters
$\kappa_{i+1}/\kappa_{i}$ and the inhomogeneity (or evaluation)
parameters $z_m$ are identified with the K\"ahler parameters of the
quantum deformation and the equivariant parameters respectively.



In the case of complete flag varieties, the authors of
\cite{Koroteev:2017aa} found another set of generators which allows the
identification of the quantum $K$-theory ring with the algebra of functions
on a certain Lagrangian subvariety in the phase space for the
trigonometric Ruijsenaars-Schneider model.  The formulas used to
establish this (see Proposition 4.4 of \cite{Koroteev:2017aa}) are
strikingly similar to the equations \eqref{eq:MiuraQOperCond}
describing nondegenerate twisted $q$-opers.  Let us normalize the
section $s(z)$ in the definition of a twisted $q$-oper so that all
of its components are monic polynomials:
\begin{equation}\label{eq:sectionequation}
s_a (z) = \prod_{i=1}^{\rho_a}(z-w_{a,i})\,,\qquad a=1,\dots, N.
\end{equation}  If we restrict to the space of $q$-opers for which all
these polynomials have degree one, then their roots may be viewed as
coordinates.  These coordinates may be identified with the momenta of
the dual tRS model whereas the coordinates of the tRS model correspond
bijectively to the twist (K\"ahler) parameters $\kappa_{i+1}/\kappa_{i}$ \cite{Koroteev:2017aa}.


We remark that that the Bethe algebra of the XXX model has a similar
enumerative description: it is the equivariant quantum cohomology ring of the
cotangent bundle of a partial flag variety~\cite{GORBOUNOV201356}.
This led
Rimanyi, Tarasov and Varchenko to conjecture the analogous statement for the XXZ Bethe algebra~\cite[Conjecture 13.15]{Rimanyi:2014ef}.  Moreover, they
found a particular set of generators for the Bethe algebra, involving
determinantal formulas, which for complete flag varieties can be
identified with the generators for the quantum $K$-theory ring
discussed in the previous paragraph.

Their generators can be given a geometric interpretation as
coordinates on an appropriate space of $q$-opers, and this leads to
the following description of the quantum $K$-theory ring:



\begin{theorem}
  Let $X$ be the cotangent bundle of the $\GL(L)$ partial flag variety
  $T^*\mathbb{F}l_{\bm{\mu}}$ labeled by the vector
  $\bm{\mu}=(r_{N-1}-r_{N-2},\dots,r_1-r_2, L-r_{1})$ where
  $r_1,\dots,r_{N-1}$ and $L$ are the dimensions of the vector spaces
  corresponding to the nodes of the $A_{N-1}$ quiver and the framing on
  the first node in \figref{fig:ANquiver} respectively.  Let $T$ be a
  maximal torus in $\GL(L)$.

  Then the $T$-equivariant quantum $K$-theory of $X$ is given by the
  algebra
\begin{equation}
QK_T(X)=\frac{\mathbb{C}\left[\bm{p}^{\pm
      1},\bm{\kappa}^{\pm 1},\bfa^{\pm 1},q^{\pm\frac{1}{2}}\right]}{\left(\det M(z)=\det V_{1,\dots, N} \cdot\Pi(z)^{(\frac{1-N}{2})}\right)}\,,
\label{eq:QKTideal}
\end{equation}
where $\bm{\kappa} = (\kappa_1,\dots,\kappa_N)$ are the quantum
deformation parameters, $\bm{a}=(a_1,\dots,a_L)$ are the equivariant
parameters of the action of $T$ on $X$, 
\begin{equation}
\bm{p}=\{p_{a,i}\}\,,\quad  i=1,\dots, \rho_a,\quad a=1,\dots, N-1\,,
\end{equation}
are the coefficients of the polynomials
\begin{equation}
s_a (z) = \prod_{i=1}^{\rho_a}(z-w_{a,i})= \sum_{i=0}^{\rho_a}(-1)^i p_{a,i}\, z^{N-i} \,,
\label{eq:smonicNorm}
\end{equation}
where $\rho_k=r_k-r_{k-1}$, $\Pi(z) = \prod\limits_{s=1}^L (z-a_s)$,
and the matrix $M$ is given by
\begin{equation}
  M=\begin{pmatrix} \,  s_{1}^{(\small{\frac{1-N}{2})}} & \kappa_{1} s_{1}^{(\small{\frac{3-N}{2})}} & \cdots & \kappa_{1}^{N-1} s_{1}^{{(\small{\frac{N-1}{2})}}} \\ \vdots & \vdots & \ddots & \vdots \\  s_{N}^{(\small{\frac{1-N}{2})}}  & \kappa_{N} s_{N}^{(\small{\frac{3-N}{2})}} & \cdots & \kappa_{N}^{N-1} s_{N}^{(\small{\frac{N-1}{2})}} \, \end{pmatrix}.
\label{eq:MiuraQOperCond1q7}
\end{equation}

\end{theorem}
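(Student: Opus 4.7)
The plan is to combine the known Bethe-algebra description of $QK_T(X)$ from \cite{Koroteev:2017aa} with the $q$-Langlands correspondence established in Theorem~\ref{eq:MainTheorem}, in order to transport the presentation of $QK_T(X)$ from coordinates given by symmetric functions of Bethe roots to coordinates given by coefficients of the polynomials $s_a$. The key point is that both sets of data parametrize the same space of nondegenerate $Z$-twisted $(\SL(N),q)$-opers, and Theorem~\ref{eq:MainTheorem} gives an explicit bijection between them.

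First, I would recall from \cite{Koroteev:2017aa} that $QK_T(X)$ is isomorphic to the Bethe algebra of the XXZ spin chain with dominant weights $\lam_m=\om_1$ attached to the marked points $z_m=a_m$, where the K\"ahler parameters match the ratios $\kappa_{i+1}/\kappa_i$ and the equivariant parameters match the $a_s$. The spectrum of $QK_T(X)$ consists of the nondegenerate solutions of the XXZ Bethe equations \eqref{eq:BetheExplicit} with $r_k$ Bethe roots at level $k$. Applying Theorem~\ref{eq:MainTheorem}, this spectrum is in bijection with the set of nondegenerate $Z$-twisted $(\SL(N),q)$-opers with the corresponding regular singularities, with the Bethe roots $u_{k,a}$ recovered from the roots $v_{k,a}$ of the polynomials $\cV_k$ via \eqref{eq:rescalings}.

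Next, I would parametrize these $q$-opers using the normalization \eqref{eq:sectionequation} and read off the $p_{a,i}$ as coordinates on the moduli space. The defining equation $\det M(z)=\det V_{1,\dots,N}\cdot\Pi(z)^{(\frac{1-N}{2})}$ is simply the quantum Wronskian relation \eqref{eq:MiuraDetForm} in the case $k=N$ (where $\cV_N=1$ and $W_N(z)$ reduces, up to $q$-shift and normalization, to $\Pi(z)^{(\frac{1-N}{2})}$); the intermediate determinants from \eqref{eq:MiuraQOperCond} for $k<N$ define the auxiliary polynomials $\cV_k$ implicitly, and by Lemma~\ref{eq:qWronskiansShift} (run in reverse) these are determined uniquely by the $s_a$'s. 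Thus a point of the variety cut out by $\det M(z)=\det V_{1,\dots,N}\cdot\Pi^{(\frac{1-N}{2})}$ in the space of coefficients $\bm p$ is the same data as a nondegenerate twisted $q$-oper with the prescribed singularities, hence the same data as a solution of the XXZ Bethe equations.

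The main obstacle is to promote this set-theoretic match of spectra to a scheme-theoretic identification of algebras, i.e.\ to show that the ideal generated by the coefficients of $\det M(z)-\det V_{1,\dots,N}\cdot\Pi(z)^{(\frac{1-N}{2})}$ is exactly the ideal of relations in $QK_T(X)$. For this, I would argue as follows. The Bethe algebra description of \cite{Koroteev:2017aa} expresses the quantum tautological generators as specific symmetric functions of the Bethe roots at each level, and the XXZ Bethe relations are encoded by the existence of auxiliary polynomials $\widetilde Q_k$ (Lemma~\ref{eq:PropQQtilde}). Via the chain of equivalences $Q_k\leftrightarrow\sD_k\leftrightarrow\mfq_{N+1-k}=s_{N+1-k}$, the symmetric functions of the Bethe roots are regular functions of the $p_{a,i}$, so we obtain a surjection from the coordinate ring of the $\det M$-variety onto $QK_T(X)$. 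To see that this is an isomorphism, I would count dimensions (both sides are generically reduced of the same dimension, equal to the number of nondegenerate Bethe solutions $=\dim QK_T(X)$) and invoke the nondegeneracy of the pairing between functions on the $q$-oper moduli space and the Bethe spectrum, matching the generators given by quantum tautological classes (symmetric functions of the Bethe roots) with polynomial functions of the $p_{a,i}$ through the explicit formulas \eqref{eq:BetheDetForm} and \eqref{eq:MiuraQOperCond}. The hardest technical step is handling this last matching uniformly for all partial flag varieties rather than only the full flag variety treated in \cite{Koroteev:2017aa}; here one exploits the freedom in the degrees $\rho_a=r_a-r_{a-1}$, which is exactly the freedom built into \eqref{eq:sectionequation} and \eqref{eq:smonicNorm}.
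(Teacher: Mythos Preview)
Your proposal follows essentially the same approach as the paper's proof: combine Theorem~3.4 of \cite{Koroteev:2017aa} (which presents $QK_T(X)$ via the XXZ Bethe equations in the special case where every dominant weight is $\omega_1$) with Theorem~\ref{eq:MainTheorem} of the present paper, and then observe that the $k=N$ case of \eqref{eq:BetheDetForm} reduces to the single determinantal relation $\det M(z)=\det V_{1,\dots,N}\cdot\Pi(z)^{(\frac{1-N}{2})}$ because $\cV_N=1$ and $W_N=\Lambda_1=\Pi$ in this weight configuration.

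The one notable difference is that you spend most of your effort on the passage from a set-theoretic bijection of spectra to a scheme-theoretic identification of algebras (surjectivity, dimension counting, reducedness). The paper's proof does not engage with this at all: it simply asserts that the Bethe equations ``can be written as'' the determinantal form via Theorem~\ref{eq:MainTheorem} and declares the proof complete. In that sense your proposal is more scrupulous than the paper's own argument. However, the chain of lemmas preceding Theorem~\ref{eq:MainTheorem} (Lemmas~\ref{eq:PropQQtilde}--\ref{eq:qWronskiansShift}) in fact establishes equivalences of \emph{systems of polynomial equations}, not merely bijections of solution sets, so the algebra-level matching is implicit in those lemmas; you could streamline your argument by pointing to this directly rather than introducing a separate dimension count and reducedness check.
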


The ideal in \eqref{eq:QKTideal} depends on the auxiliary variable
$z$, and both sides of the equation are polynomials of degree $L$ in
$z$.  Thus, the quantum $K$-theory ring is determined by $L$ relations.

The case where $X$ is a complete flag variety, so that $L = N$ and
$\rho_1=\dots =\rho_{N-1}=1$, was investigated in
\cite{Koroteev:2017aa}.  Here, the determinantal relation in
\eqref{eq:QKTideal} yields the equations of motion of the $N$-body
trigonometric Ruijsenaars-Schneider model.

We would like to emphasize that the space of $q$-opers which is
described by the system of equations \eqref{eq:MiuraQOperCond}
contains the $K$-theory of $X$ \eqref{eq:QKTideal} as a subspace. In
particular, one identifies the singularities $z_1,\dots,z_L$ of the
$q$-oper with the equivariant parameters $a_1,\dots, a_L$ of the
action of the maximal torus of $\GL(L)$ on $X$, so that
$\Pi=W_{N}=\Lambda_1$.  (For $s>1$, $l^k_s=0$, so $\Lambda_s=1$.)

\begin{proof}
  We prove this by combining two theorems. First, we will use Theorem
  3.4 in \cite{Koroteev:2017aa}, where the quantum $K$-theory of
  Nakajima quiver varieties was defined using quasimaps
  \cite{Ciocan-Fontanine:2011tg,Okounkov:2015aa} from the base curve
  of genus zero to the quiver variety.  The second ingredient is
  Theorem \ref{eq:MainTheorem} from this paper in the special case
  when the dominant weights at all oper singularities correspond to
  the defining representation, so that $l_s^1=1$ for all $s$ and the
  other $l_s^k$ vanish.  Here, the Bethe ansatz equations
  \eqref{eq:BetheExplicit} are given by
\begin{align}\label{eq:BetheExplicit13}
\frac{\kappa_2}{\kappa_{1}}\prod_{s=1}^L \frac{u_{1,a}-a_s}{q^{-1}\,u_{1,a}-a_s}\cdot\prod_{b=1}^{r_1}\frac{q^{-1} u_{1,a}-u_{1,b}}{qu_{1,a}- u_{1,b}}\cdot\prod_{d=1}^{r_{2}}\frac{q^{\frac{1}{2}}u_{1,a}-u_{2,d}}{q^{-\frac{1}{2}} u_{1,a}-u_{2,d}}&=1\,,\cr
\frac{\kappa_{k+1}}{\kappa_{k}}\prod_{c=1}^{
 r_{k-1}}\frac{q^{\frac{1}{2}}u_{k,a}-u_{k-1,c}}{q^{-\frac{1}{2}} u_{k,a}-u_{k-1,c}}\cdot\prod_{b=1}^{r_k}\frac{q^{-1} u_{k,a}-u_{k,b}}{qu_{k,a}- u_{k,b}}\cdot\prod_{d=1}^{r_{k+1}}\frac{q^{\frac{1}{2}}u_{k,a}-u_{k+1,d}}{q^{-\frac{1}{2}} u_{k,a}-u_{k+1,d}}&=1\,,\\
\frac{\kappa_{N}}{\kappa_{N-1}} \prod_{c=1}^{r_{N-2}}\frac{q^{\frac{1}{2}}u_{N,a}-u_{N-1,c}}{q^{-\frac{1}{2}} u_{N,a}-u_{N-1,c}}\cdot\prod_{b=1}^{r_{N-1}}\frac{q^{-1} u_{N-1,a}-u_{N-1,b}}{qu_{N-1,a}- u_{N-1,b}}&=1\,,\notag
\end{align}
where $r_k=\rho_1+\dots+\rho_k$ and $k$ runs from $2$ to $N-2$ in the
middle equation.

The system \eqref{eq:BetheExplicit13} coincides with the Bethe
equations from Theorem 3.4 of \cite{Koroteev:2017aa} up to the
identification of Bethe roots and twists.  This latter set of Bethe
equations describes the relations in the quantum $K$-theory of $X$,
where the Bethe roots $v_{k,a}$ are the Chern roots of the $k$-th
tautological bundle over $X$ and the other variables are identified
with the geometry of $X$ as in the statement of the theorem.

We have proven in Theorem \ref{eq:MainTheorem} that equations
\eqref{eq:BetheExplicit13} can be written as \eqref{eq:BetheDetForm}.  For
$k=N$ and for the dominant weights above, this gives
\begin{equation}
\det  M_{1,\dots, N}(z)=\det V_{1,\dots, N} W_N(z)^{(\frac{1-N}{2})}=\det V_{1,\dots, N} \Pi(z)^{(\frac{1-N}{2})}\,.
\end{equation}
This statement completes the proof.
\end{proof}

\subsection{The trigonometric RS model in the dual frame}
The trigonometric Ruijsenaars-Schneider model enjoys \emph{bispectral
  duality}. This may be described in geometric language as follows.
For a given quiver variety $X$ of type $A$, there are two dual
realizations of the tRS model.  The first was explained for $\SL(2)$
in Section~\ref{s:trssl2}.  Here, the twist variables
$\bm{\kappa}$ play the role of particle positions; their
conjugate momenta
$p_{\bm{\kappa}}=(p_{\kappa_1},\dots,p_{\kappa_N})$ are
defined as
\begin{equation}
p_{\bm{\kappa}}=\exp\left(\frac{\partial \mathcal{Y}}{\partial \log\bm{\kappa}}\right)\,,
\end{equation}
where $\mathcal{Y}$ is the so-called Yang-Yang function which depends
on the Bethe roots $v_{k,a}$ as well as all other parameters. The
Yang-Yang function serves as a potential for equations
\eqref{eq:BetheExplicit13} \cite{Nekrasov:2009uh,Nekrasov:2009ui},
i.e., the $k$-th equation is given by 
\begin{equation}
\exp\left(\frac{\partial \mathcal{Y}}{\partial \log v_{k,a}}\right)=1\,,\qquad a= 1,\dots r_k\,,\quad k = 1,\dots, N-1\,.
\end{equation}
(See
\cite{Gaiotto:2013bwa,Bullimore:2015fr,Koroteev:2017aa} for more details.)

The other realization---the \emph{3d-mirror} or
\emph{spectral/symplectic dual} description---involves a mirror quiver
variety $X^\vee$ and the associated dual Yang-Yang function
$\mathcal{Y}^\vee$.  (For a mathematical introduction,
see~\cite{Nakajima:2016}.  The construction of the mirror is discussed in
\cite{Gaiotto:2013bwa}.) Under the mirror map, the
K\"ahler parameters $\bm{\kappa}$ are interchanged with the
equivariant parameters $\bm{a}$; the same holds for the conjugate
momenta $p_{\bm{\kappa}}$ and $p_{\bm{a}}$.  Therefore, the variables
$\bm{a},p_{\bm{a}}$ can be viewed as the canonical degrees of freedom
in the dual tRS model; this has been studied in the context of
enumerative geometry in \cite{korzeit,Zabrodin:}. In particular, such
a duality was demonstrated between the XXZ spin chain whose Bethe
equations describe the equivariant quantum $K$-theory of the quiver
variety from \figref{fig:ANquiver} and the $L$-body tRS model whose
coordinates are the equivariant parameters $(a_1,\dots,a_L)$ of the
maximal torus for $\GL(L)$.  This result allows us to construct a
natural embedding of the intersection of two Lagrangian cycles inside
the tRS phase space into the space of $q$-opers with the first
fundamental weight at each regular singularity.

\bibliography{cpn12}
\end{document}